\documentclass[12pt]{article}
% \documentclass[]{foils}

%%%%%%%%%%%%%%%%%%%%%%%%%%%%%%%%%%%%%%%%%%%%%%%%%%%%%%%%%
%%%%%%%%%%%%%%%%%%%  preamble  %%%%%%%%%%%%%%%%%%%%%%%%%% 
%%%%%%%%%%%%%%%%%%%%%%%%%%%%%%%%%%%%%%%%%%%%%%%%%%%%%%%%%
\usepackage{amssymb}
\usepackage{epsfig}
\usepackage{amsmath}
\usepackage{amsthm}
\usepackage{subfigure}
\usepackage{url}
\usepackage{color}
\usepackage{authblk}
\usepackage{graphicx}
\usepackage{makeidx}
\usepackage{relsize}
\usepackage{titlesec}
\usepackage{float}
\usepackage{ragged2e}
\usepackage{datetime}
\usepackage[font=footnotesize,labelfont=bf]{caption}

\usepackage[margin=1.5in]{geometry}
\DeclareMathOperator{\transverse}{\cap\kern-7.75pt\top}

\definecolor{titlered}{rgb}{.6,0,0}
\definecolor{titleblue}{rgb}{0,0,0.6}
\definecolor{exerciseblue}{rgb}{0,0,.6}
\definecolor{proofgreen}{rgb}{0.6,0,0}
\definecolor{myblue}{rgb}{0,0,0.7}
\definecolor{mymagenta}{rgb}{0.7,0,0}

\newcommand{\blue}{\color{blue}}

\newcommand{\reach}{\operatorname{reach}}

\newcommand{\argmin}{\operatornamewithlimits{argmin}}

\newcommand{\diam}{\operatorname{diam}}

\newcommand{\cls}{\operatorname{cls}}
\newcommand{\unique}{\operatorname{unique}}
\newcommand{\dist}{\operatorname{dist}}

\newcommand{\R}{\Bbb{R}}

\newcommand{\Hd}{\mathcal{H}}

\let\oldhat\hat 
 
\renewcommand{\hat}[1]{\oldhat{\mathbf{#1}}}

% \vec{x} \hat{x} \oldhat{x} 

%\newcommand{\mres}{\mathbin{\vrule height 1.6ex depth 0pt width
%0.13ex\vrule height 0.13ex depth 0pt width 1.3ex}}

\newtheorem{thm}{Theorem}[section]

\newtheorem{con}{Conjecture}[section]

\newtheorem{rem}{Remark}[section]
\newtheorem{exm}{Example}[section]
\newtheorem{defn}{Definition}[section]
\newtheorem{excse}{Exercise}[section]
\newtheorem{prob}{Problem}[section]
\newtheorem{que}{Question}[section]

\usepackage[bookmarks=true, % true means bookmarks
            bookmarksnumbered=false, % left window are numbered
            bookmarksopen=false, % true means only level 1 are displayed. 
            colorlinks=true, 
            linkcolor=red]{hyperref}

\titleformat{\section}
{\color{titleblue}\normalfont\Large\bfseries}
{\color{titleblue}\thesection}{1em}{}

\titleformat{\subsection}
{\color{titleblue}\normalfont\large\bfseries}
{\color{titleblue}\thesubsection}{1em}{}

%%%%%%%%%%%%%%%%%%%%%%%%%%%%%%%%%%%%%%%%%%%%%%%%%%%%%%%%%

\begin{document}

\title{{\bf\color{titleblue}{Cubical Covers of Sets in $\R^n$}} }
\author[1]{Laramie Paxton\thanks{realtimemath@gmail.com}} 
\author[1]{Kevin R. Vixie\thanks{vixie@speakeasy.net}}
\affil[1]{Department of Mathematics and Statistics \protect\\ Washington State University}

\date{}
\renewcommand\Authands{ and }

\maketitle

\begin{abstract}
  Wild sets in $\R^n$ can be tamed through the use of various
  representations though sometimes this taming removes features
  considered important. Finding the wildest sets for which it is still
  true that the representations faithfully inform us about the
  original set is the focus of this rather playful, expository paper
  that we hope will stimulate interest in wild sets and their
  representations, as well as the other two ideas we explore very
  briefly: Jones' $\beta$ numbers and varifolds from geometric measure
  theory.
\end{abstract}

\small{\tableofcontents}

\section{Introduction}
\label{sec:intro}

In this paper we explain and illuminate a few ideas for (1)
representing sets and (2) learning from those representations. Though
some of the ideas and results we explain are likely written down
elsewhere (though we are not aware of those references), our purpose
is not to claim priority to those pieces, but rather to stimulate
thought and exploration. Our primary intended audience is students of
mathematics even though other, more mature mathematicians may find a
few of the ideas interesting. We believe that cubical covers can be
used at an earlier point in the student career and that both the
$\beta$ numbers idea introduced by Peter Jones and the idea of
varifolds pioneered by Almgren and Allard and now actively being
developed by Menne, Buet, and collaborators are still very much
underutilized by all (young and old!). To that end, we have written
this exploration, hoping that the questions and ideas presented here,
some rather elementary, will stimulate others to explore the ideas for
themselves.

We begin by briefly introducing cubical covers, Jones' $\beta$, and
varifolds, after which we look more closely at questions involving
cubical covers. Then both of the other approaches are explained in a
little bit of detail, mostly as an invitation to more exploration,
after which we close with problems for the reader and some unexplored
questions.

{\bf Acknowledgments:} LP thanks Robert Hardt and Frank Morgan for
useful comments and KRV thanks Bill Allard for useful conversations and
Peter Jones, Gilad Lerman, and Raanan Schul for introducing him to the 
idea of the Jones' $\beta$ representations.
\newpage
\section{Representing Sets \& their Boundaries in $\R^n$} 

\subsection{Cubical Refinements: Dyadic Cubes}

In order to characterize various sets in $\R^n$, we explore
the use of cubical covers whose cubes have side
lengths which are positive integer powers of $\frac{1}{2}$,
\textbf{dyadic cubes}, or more precisely, (closed) dyadic $n$-cubes
with sides parallel to the axes. Thus the side length at the $d$th subdivision
is $l(C)=\frac{1}{2^d}$, which can be made as small as desired.

Figure~\ref{fig:dyadic} illustrates this by looking at a unit cube
in $\R^2$ lying in the first quadrant with a vertex at the origin. We
then form a sequence of refinements by dividing each side length in
half successively, and thus quadrupling the number of cubes each time. 

\begin{defn}
We shall say that the $n$-cube $C$ (with side length denoted as $l(C)$) is dyadic if
\[
C=\prod^n_{j=1}[m_j2^{-d},(m_j+1)2^{-d}], \ \ \ m_j \in \mathbb{Z}, \ d\in \mathbb{N}\cup\{0\}.
\]

\end{defn}
\begin{figure} [H]
\centering

      \input{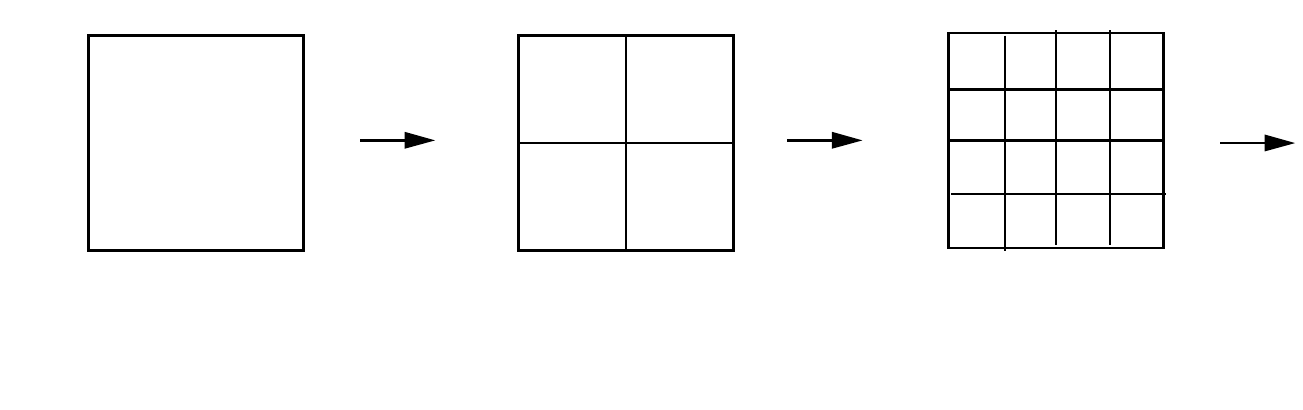_t}
      
      \caption{Dyadic Cubes.} 
      \label{fig:dyadic}
 %\end{center}
\end{figure}

In this paper, we will assume $C$ to be a dyadic $n$-cube
throughout. We will denote the union of the dyadic $n$-cubes with edge
length $\frac{1}{2^d}$ that intersect a set $E\subset \R^n$ by
$\mathcal{C}^E_d$ and define $\partial\mathcal{C}^E_d$ to
be the boundary of this union (see Figure
\ref{fig:cubicalcover1}). Two simple questions we will explore for
their illustrative purposes are:

\begin{enumerate}
\item ``If we know $\mathcal{L}^n(\mathcal{C}^E_d)$, what can we say about $\mathcal{L}^n(E)$?'' and similarly,
\item ``If we know $\mathcal{H}^{n-1}(\partial\mathcal{C}^E_d)$, what can we say about $\mathcal{H}^{n-1}(\partial E)$?''
\end{enumerate}
\begin{figure}[H]
  \centering
  \scalebox{0.5}{\input{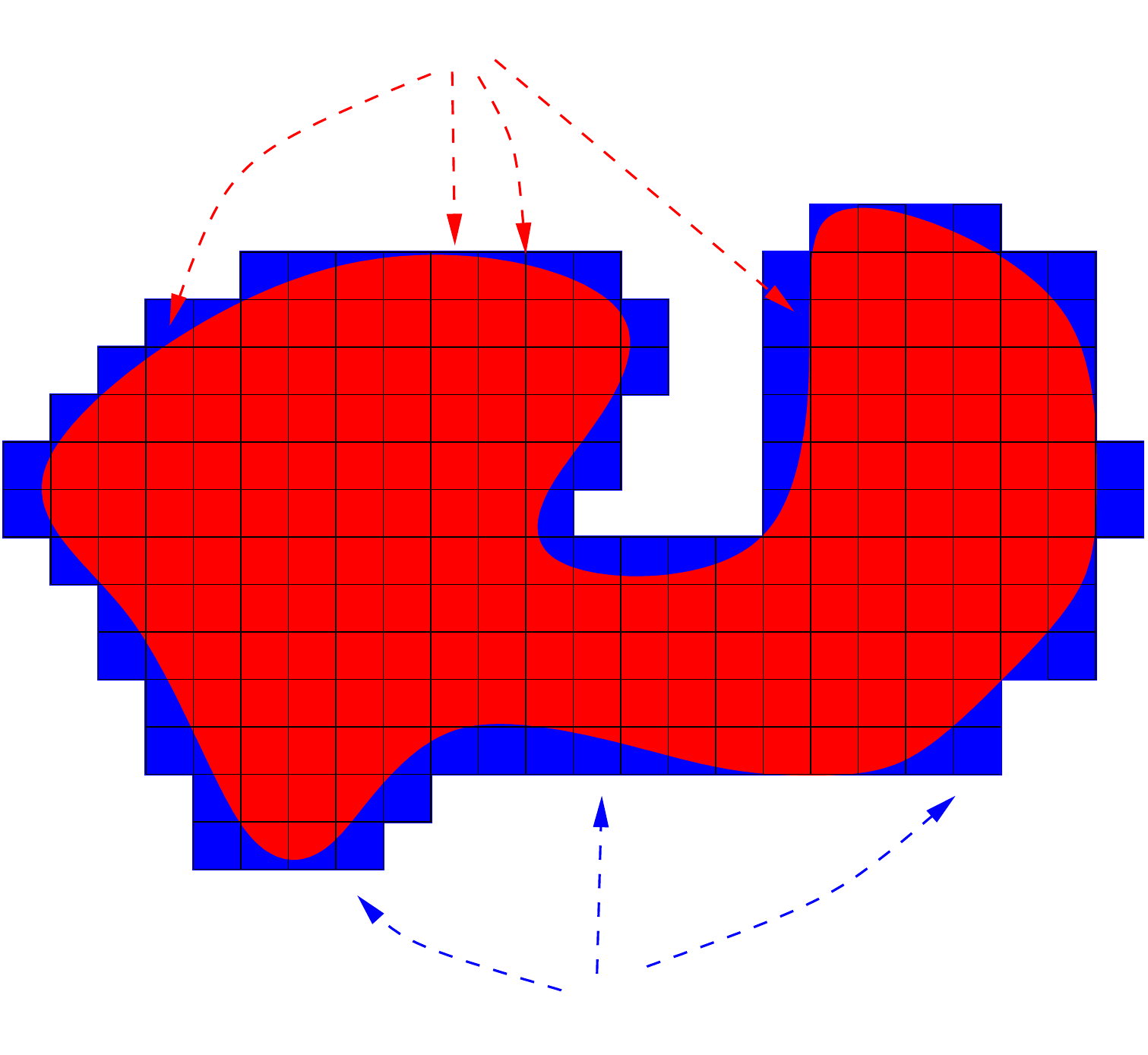_t}}
  \caption{Cubical cover $\mathcal{C}^E_d$ of a set $E$.}
  \label{fig:cubicalcover1}
\end{figure}

\subsection{Jones' $\beta$ Numbers}

Another approach to representing sets in $\R^n$, developed by Jones
\cite{jon90}, and generalized by Okikiolu \cite{oki92}, Lerman
\cite{ler03}, and Schul \cite{sch07}, involves the question of under
what conditions a bounded set $E$ can be contained within a
rectifiable curve $\Gamma$, which Jones likened to the Traveling
Salesman Problem taken over an infinite set. (See Definition~\ref{rect}
below for the definition of rectifiable.)

Jones showed that if the aspect ratios of the optimal containing
cylinders in  each dyadic cube go to zero fast enough, the set $E$
is contained in a rectifiable curve. Jones' approach ends up providing
one useful approach of defining a representation for a set in $\R^n$
similar to those discussed in the next section. We return to this
topic in Section \ref{jonessection2}. The basic idea is illustrated in
Figure~\ref{fig:jones1}.

\begin{figure} [H]
\begin{center}
      \input{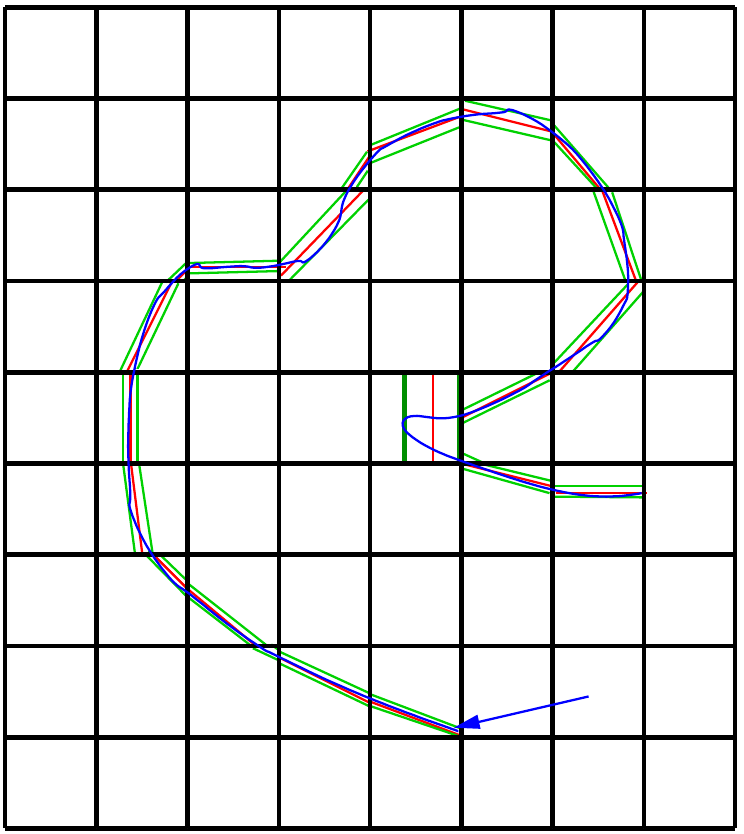_t}
      \caption{Jones' $\beta$ Numbers. The green lines indicate the thinnest cylinder containing $\Gamma$ in the cube $C$. We see from this relatively large width that $\Gamma$ is not very ``flat" in this cube.}\label{jones1}
  \label{fig:jones1}
 \end{center}
\end{figure}

\subsection{Working Upstairs: Varifolds}\label{Var}

A third way of representing sets in $\R^n$ uses \emph{varifolds}. 
Instead of representing $E\subset \R^n$ by working in $\R^n$, we work
in the \emph{Grassmann Bundle}, $\R^n\times G(n,m)$. 

We parameterize the Grassmannian $G(2,1)$ by taking the upper unit semicircle in
$\R^2$ (including the point $(1,0)$, but not including $(1,\pi)$,
where both points are given in polar coordinates) and straightening it
out into a vertical axis (as in Figure \ref{var1a}). The bundle $\R^2
\times G(2,1)$ is then represented by $\R^2 \times [0,\pi).$
\begin{figure}[H]
\begin{center}
      \input{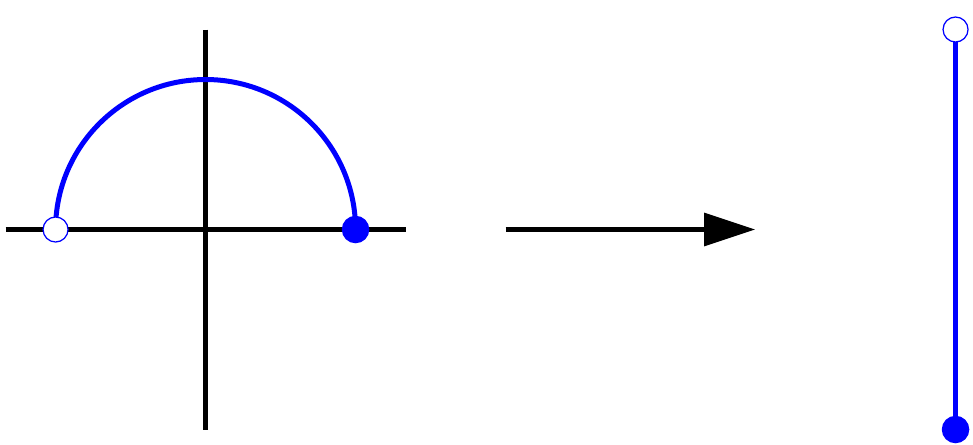_t}
      \caption{The vertical axis for the ``upstairs."}\label{var1a}
 \end{center}
\end{figure}

\begin{figure}[H]
\begin{center}
      \input{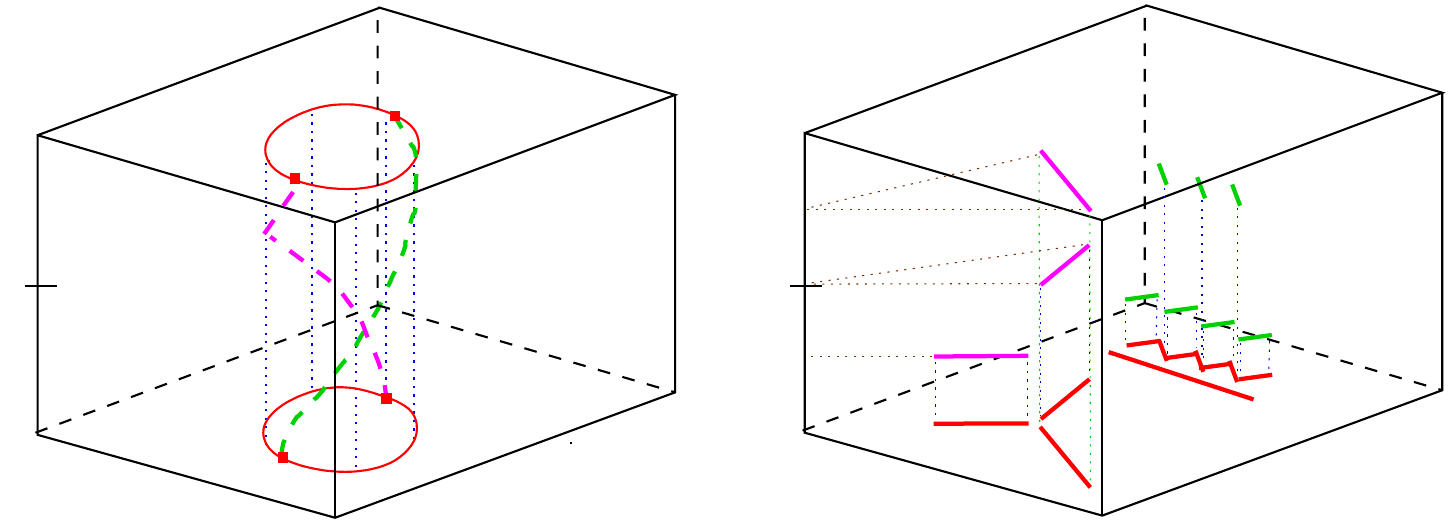_t}
      \caption{Working Upstairs in the Grassmann bundle.}\label{var1c}
 \end{center}
\end{figure}
Figure~\ref{var1c} illustrates how the tangents are built into this
representation of subsets of $\R^n$, giving us a sense of why this
representation might be useful. A circular curve in $\R^2$ becomes two
half-spirals upstairs (in the Grassmann bundle representation, as
shown in the first image of Figure \ref{var1c}). Other curves in $\R^2$
are similarly illuminated by their  Grassmann bundle
representations. We return to this idea in Section \ref{Var2}.

\section{Simple Questions}

Let $E\subset \R^n$ and $C$ be any dyadic $n$-cube as before. Define \[\mathcal{C}(E,d) = {\{C \ | \ C\cap E \neq \emptyset, \ l(C) = {1}/{2^d}\}}\]
and, as above,
\[\mathcal{C}^E_d \equiv \bigcup_{C\in \mathcal{C}(E,d)} C. \] 
%Further, let $\partial \mathcal{C}^E_d$ represent the topological boundary of this union. 
Here are two questions:
\begin{enumerate}
\item Given $E\subset\R^n$, when is there a $d_0$ such that for all $d\geq d_0$, we have
\begin{equation}\label{A}
 \mathcal{L}^n(\mathcal{C}^E_d) \leq M(n)\mathcal{L}^n(E)
\end{equation}
for some constant $M(n)$ independent of $E$?
\item Given $E\subset\R^n$, and any $\delta>0$, when does there exists
  a $d_0$ such that for all $d\geq d_0$, we have
\begin{equation}\label{B}
 \mathcal{L}^n(\mathcal{C}^E_d) \leq (1+\delta)\mathcal{L}^n(E)?
\end{equation} 
\end{enumerate}

\begin{rem} Of course using the fact that Lebesgue measure is a Radon
  measure, we can very quickly get that for $d$ large enough
  (i.e. $2^{-d}$ small enough), the measure of the cubical cover is as
  close to the measure of the set as you want, as long as the set is
  compact and has positive measure.  But the focus of this paper is on
  what we can get in a much more transparent, barehanded fashion, so we
  explore along different paths, getting answers that are, by some metrics,
  suboptimal.
\end{rem}

\begin{exm}\label{exm:rational-points}
  If $E=\mathbb{Q}^n\cap [0,1]^n$, then $\mathcal{L}^n(E)=0$, but
  $\mathcal{L}^n(\mathcal{C}^E_d)=1 \ \forall d \geq 0$.
\end{exm}
\begin{exm}
  Let $E$ be as in Example~\ref{exm:rational-points}. Enumerate $E$
  as $\hat{q_1}, \hat{q_2},\hat{q_3}, \ldots$. Now let
  $D_i=B(\hat{q_i},\frac{\epsilon}{2^i})$ and $E_\epsilon \equiv
  \{\cup D_i\} \cap [0,1]^n$ with $\epsilon$ chosen small enough so
  that $\mathcal{L}^n(E_\epsilon)\leq \frac{1}{100}$. Then
  $\mathcal{L}^n(E_\epsilon)\leq \frac{1}{100}$, but
  $\mathcal{L}^n(\mathcal{C}^{E_\epsilon}_d)=1 \ \forall d > 0$.
\end{exm}

\subsection{A Union of Balls}\label{union}

For a given set $F\subseteq \R^n,$ suppose $E= \cup_{x\in F}\bar{B}(x,r)$, a
union of closed balls of radius $r$ centered at each point $x$ in $F$.
Then we know that $E$ is \textbf{regular} (locally Ahlfors $n$-regular
or \textit{locally} $n$-regular), and thus there exist $0 < m < M <
\infty$ and an $r_0>0$ such that for all $x\in E$ and for all
$0<r<r_0$, we have
\[
m r^n \leq \mathcal{L}^n(\bar{B}(x, r)\cap E) \leq M r^n.
\]
This is all we need to establish a sufficient condition for Equation
(\ref{A}) above. 

\begin{rem}
The upper bound constant $M$ is immediate
since $E$ is a union of $n$-balls, so  $M=\alpha_n$, the
$n$-volume of the unit $n$-ball, works. However, this is not the case for
$k$-regular sets in $\R^n$, $k<n$, since we are now asking for a bound on the
$k$-dimensional measure of an $n$-dimensional set which could easily
be infinite.
\end{rem}

\begin{enumerate}
\item Suppose $E= \cup_{x\in F}\bar{B}(x,r)$, a union of closed balls of
  radius $r$ centered at each point $x$ in $F$.
\item Let $\mathcal{C}=\mathcal{C}(E,d)$ for some $d$ such that
  $\frac{1}{2^d} \ll r$, and let $\oldhat{\mathcal{C}}=\{3C \mid C\in
  \mathcal{C}\},$ where $3C$ is an $n$-cube concentric with $C$ with
  sides parallel to the axes and $l(3C)=3l(C)$, as shown in Figure
  \ref{3c}.
 \begin{figure}[H]
\begin{center}
      \input{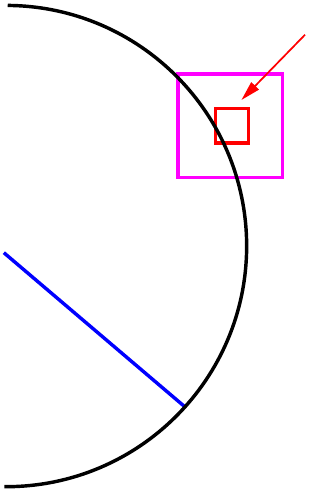_t}
      \caption{Concentric Cubes.}\label{3C}
      \label{3c}
 \end{center}
\end{figure}
\item\label{crucial-cube-step} This implies that for $3C\in \oldhat{\mathcal{C}}$
\begin{equation}
\frac{\mathcal{L}^n(3C\cap E)}{\mathcal{L}^n(3C)}>\theta >0, \ \ \ \text{with} \  \theta \in \R.
\end{equation}
\item We then make the following observations:
\begin{enumerate}
\item Note that there are $3^n$ different tilings of the plane by $3C$
  cubes whose vertices live on the $\frac{1}{2^d}$ lattice. (This can
  be seen by realizing that there are $3^n$ shifts you can perform on
  a $3C$ cube and both (1) keep the originally central cube $C$ in the
  $3C$ cube and (2) keep the vertices of the $3C$ cube in the
  $\frac{1}{2^d}$ lattice.)
\item Denote the $3C$ cubes in these tilings $\mathcal{T}_i, i =
  1,...,3^n$.
\item Define $\oldhat{\mathcal{C}}_i \equiv \oldhat{\mathcal{C}}\cap\mathcal{T}_i$.
\item Note now that by Step~(\ref{crucial-cube-step}), the number of $3C$ cubes in
  $\oldhat{\mathcal{C}}_i$ cannot exceed
\[N_i \equiv \frac{ \mathcal{L}^n(E)}{\theta \mathcal{L}^n(3C)}.\]
\item Denote the total number of cubes in $\mathcal{C}$ by $N_{\mathcal{C}^E_d}$.
\item The number of cubes in $\mathcal{C}$,  $N_{\mathcal{C}^E_d}$, cannot exceed 
\[\sum_{i=1}^{3^n} N_i = 3^n\frac{ \mathcal{L}^n(E)}{\theta \mathcal{L}^n(3C)}.\]
\item Putting it all together, we get
\begin{eqnarray}
  \mathcal{L}^n(\mathcal{C}_d^E)  &=& \mathcal{L}^n(\cup_{C\in\mathcal{C}} C)\nonumber\\  
& = & N_{\mathcal{C}^E_d} \mathcal{L}^n(C) \nonumber\\
&\leq & 3^n \frac{\mathcal{L}^n(E) }{\theta \mathcal{L}^n(3C)} \mathcal{L}^n(C) \nonumber\\
& = &  \frac{\mathcal{L}^n(E)}{\theta}. 
\end{eqnarray}
\end{enumerate}
\item This shows that if $E= \cup_{x\in F}\bar{B}(x,r)$, then 
\[  \mathcal{L}^n(\mathcal{C}^E_d) \leq\frac{1}{\theta}\mathcal{L}^n(E).\]
\end{enumerate}
We now have two conclusions:
\begin{description}
\item[Regularized sets]We notice that for any fixed $r_0 > 0$, as long as we pick $d_0$ big enough, then $r < r_0$ and $d > d_0$ imply that $E= \cup_{x\in F}\bar{B}(x,r)$ satisfies
  \[ \mathcal{L}^n(\mathcal{C}^E_d)
  \leq\frac{1}{\theta(n)}\mathcal{L}^n(E),\] for a $\theta(n) > 0$ that
  depends on n, but not on $F$.
\item[Regular sets] Now suppose that \[F \in \mathcal{R}_m \equiv \{W\subset\R^n \;| \; mr^n < \mathcal{L}^n(W\cap \bar{B}(x,r)),\; \forall \; x \in W \text{ and } r< r_0\}.\] Then we immediately get the same result:  for a big enough $d$ (depending only on $r_0$),
  \[ \mathcal{L}^n(\mathcal{C}^F_d)
  \leq\frac{1}{\theta(m)}\mathcal{L}^n(F),\] where $\theta(m) > 0$
  depends only on the regularity class that $F$ lives in and not on
  which subset in that class we cover with the cubes.
\end{description}

\subsection{Minkowski Content}

\begin{defn}
\emph{(Minkowski content).} Let $W\subset \R^n$, and let $W_r \equiv \{x \mid d(x,W)<r\}$. The $(n-1)$-dimensional Minkowski Content is defined as $\mathcal{M}^{n-1}(W)\equiv \lim_{r \rightarrow 0} \frac{\mathcal{L}^n(W_r)}{2r}$, when the limit exists (see Figure \ref{Min}).
\end{defn}

\begin{defn}\label{rect}
\emph{(($\Hd^{m},m$)-rectifiable set)}. A set $W\subset \R^n$ is called {\bf ($\Hd^{m},m$)-rectifiable} if $\mathcal{H}^m(W)<\infty$ and $\mathcal{H}^m$-almost all of $W$ is contained in the union of the images of countably many Lipschitz functions from $\R^m$ to $\R^n$. We will use {\bf rectifiable} and {\bf ($\Hd^{m},m$)-rectifiable} interchangeably when the dimension of the sets are clear from the context.
\end{defn}
%Morgan pg 29. 

 \begin{figure}[H]
\begin{center}
\scalebox{0.5}{\input{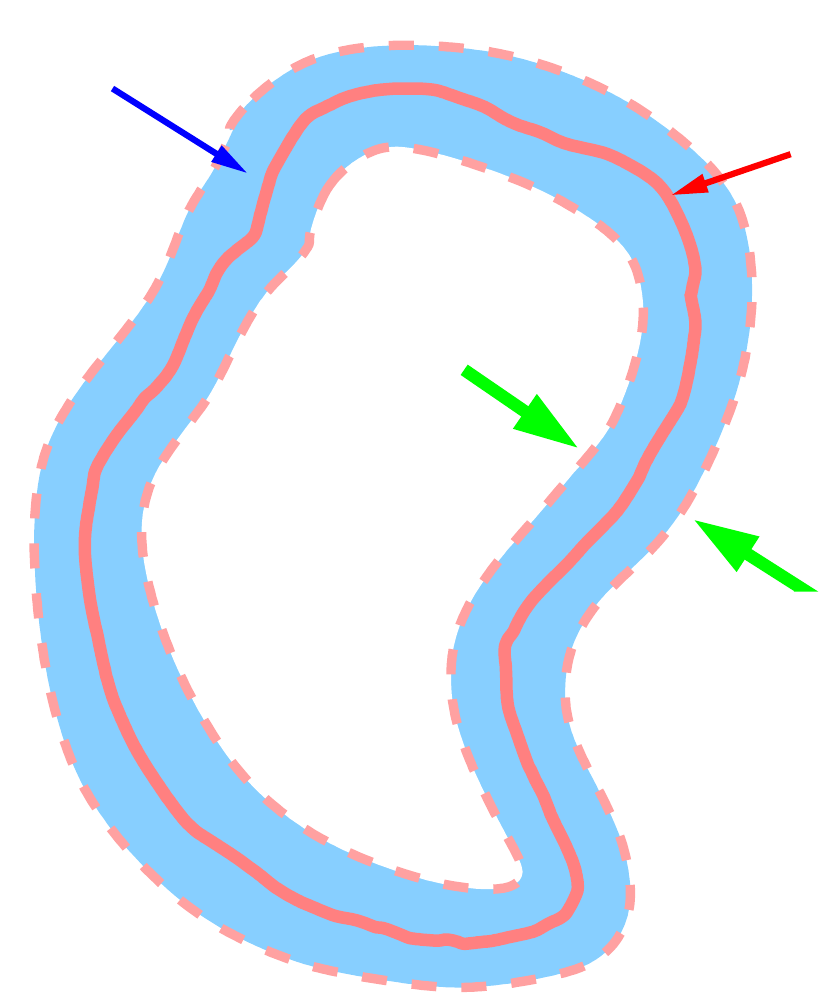_t}}
      \caption{Minkowski Content.}\label{Min}
 \end{center}
\end{figure}

\begin{defn}[m-rectifiable]
  We will say that $E\subset\R^n$ is {\bf $m$-rectifiable} if there is a Lipschitz function mapping a bounded subset of $\R^m$ onto $E$.
\end{defn}

\begin{thm}\label{mink-equals-hausdorff}
$\mathcal{M}^{n-1}(W)=\mathcal{H}^{n-1}(W)$ when $W$ is a closed, $(n$-$1)$-rectifiable set.
\end{thm}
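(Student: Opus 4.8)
The plan is to establish the two one-sided bounds
$$\liminf_{r\to 0^+}\frac{\Ln{n}(W_r)}{2r}\ \ge\ \Hd^{n-1}(W)\qquad\text{and}\qquad \limsup_{r\to 0^+}\frac{\Ln{n}(W_r)}{2r}\ \le\ \Hd^{n-1}(W);$$
together they show that the limit defining $\mathcal{M}^{n-1}(W)$ exists and equals $\Hd^{n-1}(W)$. I expect the lower bound to be the forgiving one, since discarding part of $W$ only shrinks the neighborhood, whereas the upper bound is where both hypotheses --- closedness and rectifiability --- must genuinely be used, and is the real content of the statement.

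First I would set up the rectifiable structure. Because $W$ is a Lipschitz image of a bounded set in $\R^{n-1}$, the area formula gives $\Hd^{n-1}(W)<\infty$. Fixing $\eps>0$, a Lusin-type $C^1$ approximation of the Lipschitz parametrization produces finitely many pairwise disjoint compact pieces $K_1,\dots,K_I$, each contained in an embedded $C^1$ hypersurface, such that the remainder $R\equiv W\setminus\bigcup_{i}K_i$ is closed (inherited from $W$) with $\Hd^{n-1}(R)<\eps$. The point of this decomposition is that at $\Hd^{n-1}$-almost every point of $W$ there is an approximate tangent plane along which $W$ has density one, and it is this flatness that forces the sharp constant $2$ rather than a merely comparable constant.

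On a single piece the estimate is the classical tube computation: for $r$ below the local reach of the ambient $C^1$ surface, the normal map $(y,t)\mapsto y+t\,\nu(y)$ with $y\in K_i$ and $|t|<r$ is injective with Jacobian $1+o(1)$, so the one-piece neighborhood has $\Ln{n}((K_i)_r)=2r\,\Hd^{n-1}(K_i)\,(1+o(1))$. For the lower bound I would discard $R$ entirely and note that the tubes of the finitely many $K_i$ overlap only inside the $r$-neighborhoods of the $(n-2)$-dimensional seams between pieces, whose volume is $O(r^2)=o(r)$; inclusion--exclusion then gives $\liminf_{r}\Ln{n}(W_r)/(2r)\ge\sum_{i\le I}\Hd^{n-1}(K_i)$, and exhausting $W$ as $\eps\to 0$ (so that $I\to\infty$ and $\sum_i\Hd^{n-1}(K_i)\to\Hd^{n-1}(W)$) yields the desired inequality.

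The upper bound is where I expect the main obstacle. From $\Ln{n}(W_r)\le\sum_{i\le I}\Ln{n}((K_i)_r)+\Ln{n}(R_r)$, the leading pieces contribute at most $2r\,\Hd^{n-1}(W)(1+o(1))$ by the tube computation, so everything reduces to controlling $\Ln{n}(R_r)$. The delicate point is that $R$ has small Hausdorff measure but this alone does \emph{not} bound the volume of its neighborhood: a set of vanishing $\Hd^{n-1}$ can have a neighborhood whose volume is not $o(r)$, precisely the phenomenon exhibited by the dense set $E=\mathbb{Q}^n\cap[0,1]^n$ of Example~\ref{exm:rational-points}. Ruling this out is exactly where closedness is indispensable: for the \emph{closed} rectifiable set $R$, a crude covering-number estimate (the image under the Lipschitz parametrization of a compact domain is coverable by $\lesssim \Hd^{n-1}(R)/r^{n-1}$ balls of radius comparable to $r$) gives $\Ln{n}(R_r)\le C(n)\,r\,\Hd^{n-1}(R)<C(n)\,\eps\,r$ for $r$ small. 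Letting $r\to 0$ and then $\eps\to 0$ kills the remainder and completes the upper bound, and combining the two bounds gives the equality.
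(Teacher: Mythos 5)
The paper itself offers no proof of this theorem --- it simply defers to Theorem 3.2.39 of Federer \cite{fed69} --- so your attempt must be judged on its own merits, and it contains a genuine gap precisely at the step you correctly identified as the crux. The claimed covering bound, that a closed set $R$ with small $\Hd^{n-1}(R)$ can be covered by $\lesssim \Hd^{n-1}(R)/r^{n-1}$ balls of radius $r$, is false: closedness does not rescue you. Covering numbers at scale $r$ are a box-counting quantity, and compact sets can have upper box dimension strictly larger than Hausdorff dimension; were your estimate true, it would yield $\mathcal{M}^{*,n-1}(R)\leq C(n)\,\Hd^{n-1}(R)$ for \emph{every} compact $R$, which is known to fail --- indeed the paper's own remark after the theorem notes that the equality fails for $(\Hd^{n-1},n-1)$-rectifiable sets, and Federer's counterexamples can be taken closed. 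What actually controls the covering number of a Lipschitz image $f(B)$ at scale $r$ is not $\Hd^{n-1}(f(B))$ but the \emph{domain} measure: covering $B$ by grid cubes of side $\sim r/\Lip(f)$ gives $N(f(B),r)\leq C(n)\Lip(f)^{n-1}\,\Ln{n-1}(B_{cr})/r^{n-1}$, whose limit as $r\to 0$ involves $\Ln{n-1}(\bar{B})$. So the correct repair is to run all the bookkeeping upstairs in $\R^{n-1}$: closedness of $W$ enters to let you take the parametrizing domain compact (since $f(\bar{A})\subset\bar{W}=W$ forces $W=f(\bar{A})$), and the Lusin-type decomposition must be chosen in the domain, with the bad set $A\setminus K'$ contained in a countable union of open balls of total measure $<\eps$ so that its \emph{closure} also has measure $<2\eps$. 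Note that $\Hd^{n-1}(R)$ small neither implies nor is implied by this domain control, since $f$ may collapse measure; this is exactly where the strong hypothesis ``$m$-rectifiable'' (a single Lipschitz image of a bounded set) is used, as opposed to $(\Hd^m,m)$-rectifiability, for which the theorem is false.

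A secondary, fixable flaw: your per-piece tube computation invokes ``the local reach of the ambient $C^1$ surface'' and the Jacobian $1+o(1)$ of the normal map, but a $C^1$ hypersurface can have \emph{zero} reach --- positive reach essentially requires $C^{1,1}$, as the paper's Section~\ref{sec:reach} recalls --- and the normal field of a $C^1$ surface is only continuous, so the normal map has no classical Jacobian. The lower bound survives by a Fubini argument instead: on a piece $K_i$ where the normals oscillate by less than $\eps$, the segments of length $2r$ normal to the (nearly constant) tangent plane through the projection $\Pi(K_i)$ lie in $(K_i)_r$, giving $\Ln{n}((K_i)_r)\geq 2r(1-\eps)\Hd^{n-1}(K_i)$; the per-piece upper bound needs slab coverings with bounded overlap rather than a normal-map Jacobian. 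Also a small simplification: your finitely many pieces $K_i$ are pairwise disjoint compact sets, hence at positive mutual distance, so for small $r$ their $r$-tubes are disjoint outright and no $O(r^2)$ seam estimate is needed.
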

See Theorem 3.2.39 in \cite{fed69} for a proof.

\begin{rem}
  Notice that $m$-rectifiable is more restrictive that $(\Hd^m,m)$-rectifiable. In fact, Theorem~\ref{mink-equals-hausdorff} is false for $(\Hd^m,m)$-rectifiable sets. See the notes at the end of section  3.2.39 in \cite{fed69} for details.
\end{rem}
 
 Now, let $W$ be ($n$-$1$)-rectifiable, set $r_d \equiv
 \sqrt{n}\left(\frac{1}{2^d}\right)$, and choose $r_\delta$ small
 enough so that
\[
\mathcal{L}^n(W_{r_d}) \leq \mathcal{M}^{n-1}(W)2r_d + \delta,
\]
for all $d\in \mathbb{N}\cup\{0\}$ such that $r_d\leq r_\delta.$
(Note: Because the diameter of an $n$-cube with edge length
$\frac{1}{2^d}$ is $r_d = \sqrt{n}\left(\frac{1}{2^d}\right)$ , no point of
  $\mathcal{C}_d^W$ can be farther than $r_d$ away from $W$. Thus
  $\mathcal{C}_d^W\in W_{r_d}$.)

Assume that $\mathcal{L}^n(E) \neq 0$ and $\partial E$ is ($n$-$1$)-rectifiable. Letting $W\equiv \partial E$, we  have
\begin{eqnarray*}
\mathcal{L}^n(\mathcal{C}^E_d)-\mathcal{L}^n(E) &\leq& \mathcal{L}^n(W_{r_d})\\
& \leq& \mathcal{M}^{n-1}(\partial E)2r_d + \delta\\ 
&\leq& \mathcal{M}^{n-1}(\partial E)2r_\delta + \delta 
\end{eqnarray*}
so that
\begin{equation}\label{eq:mink-bound}
\mathcal{L}^n(\mathcal{C}^E_d)\leq (1+\oldhat{\delta})\mathcal{L}^n(E), \ \ \text{where} \ \oldhat{\delta}=\frac{\mathcal{M}^{n-1}(\partial E)2r_\delta + \delta}{\mathcal{L}^n(E)}.
\end{equation}
Since we control $r_\delta$ and $\delta$, we can make $\oldhat{\delta}$ as small as we like, and we have a sufficient condition to establish Equation (\ref{B}) above.\\

\noindent {\bf The result}: let $\oldhat{\delta}$ be as in
Equation~(\ref{eq:mink-bound}) and $E\subset \R^n$ such that
$\mathcal{L}^n(E) \neq 0$. Suppose that $\partial E$ (which is
automatically closed) is ($n$-$1$)-rectifiable and $\Hd^{n-1}(\partial
E)<\infty$, then, for every $\delta>0$ there exists a $d_0$ such that
for all $d\geq d_0$,
\[\mathcal{L}^n(\mathcal{C}^E_d)\leq (1+\oldhat{\delta})\mathcal{L}^n(E).\]

\begin{prob}
  Suppose that $E\subset\R^n$ is bounded. Show that for any $r > 0$,
  $E_r$, the set of points that are at most a distance $r$ from $E$,
  has a $(\Hd^{n-1},n-1)$-rectifiable boundary. Show this by showing
  that $\partial E_r$ is contained in a finite number of graphs of
  Lipschitz functions from $\R^{n-1}$ to $\R$. Hint: cut $E_r$ into
  small chunks $F_i$ with common diameter $D \ll r$ and prove that
  $(F_i)_r$ is the union of a finite number of Lipschitz graphs.
\end{prob}

\begin{prob}
  Can you show that in fact the boundary of $E_r$, $\partial E_r$, is
  actually ($n$-$1$)-rectifiable? See if you can use the results of the
  previous problem to help you.
\end{prob}

\begin{rem}
  We can cover a union $E$ of open balls of radius $r$, whose centers
  are bounded, with a cover $\mathcal{C}^E_d$ satisfying
  Equation~(\ref{B}). In this case, $\partial \mathcal{C}^E_d$
  certainly meets the requirements for the result just shown.
\end{rem}

\subsection{Smooth Boundary, Positive Reach}
\label{sec:reach}

In this section, we show that if $\partial E $ is \textit{smooth} (at
least $C^{1,1}$), then $E$ has positive \textit{reach} allowing us to
get an even cleaner bound, depending in a precise way on the curvature
of $\partial E$. 

We will assume that $E$ is closed. Define $E_r = \{x\in
R^n \, | \, \dist(x,E) \leq r\}$, $\cls(x) \equiv \{y\in E \; | \; d(x,E) = |x-y|\}$
and $\unique(E) = \{x \; | \, \cls(x)\text{ is a single point}\}$.

\begin{defn}[Reach]
  The  \textbf{reach} of $E$, $\reach(E)$, is defined
\[\reach(E)\equiv \sup \{r \; | \;  E_r \subset \unique(E)\}\]
 \end{defn}

 \begin{rem}
Sets of positive reach were introduced by Federer in 1959~\cite{federer1959curvature} in a paper that also introduced the famous coarea formula.    
 \end{rem}

 \begin{rem}
If $E\subset\R^n$ is ($n$-$1$)-dimensional and $E$
 is closed, then $E = \partial E$.   
 \end{rem}

Another equivalent definition involves rolling balls around the boundary of $E$.
The closed ball $\bar{B}(x,r)$ {\bf touches} $E$ if \[\bar{B}(x,r)\cap E \subset \partial\bar{B}(x,r)\cap\partial E\]
 \begin{defn}
 The  \textbf{reach} of $E$, $\reach(E)$, is defined
\[\reach(E)\equiv \sup \{r \; |   \text{ every ball of radius $r$ touching $E$ touches at a single point}\}\]
 \end{defn}
 Put a little more informally, $\reach(E)$ is the supremum of
 radii $r$ of the balls such that each ball of that radius rolling around E touches E at only one point (see Figure \ref{reach}). 

\begin{figure}[H]
\begin{center}
      \input{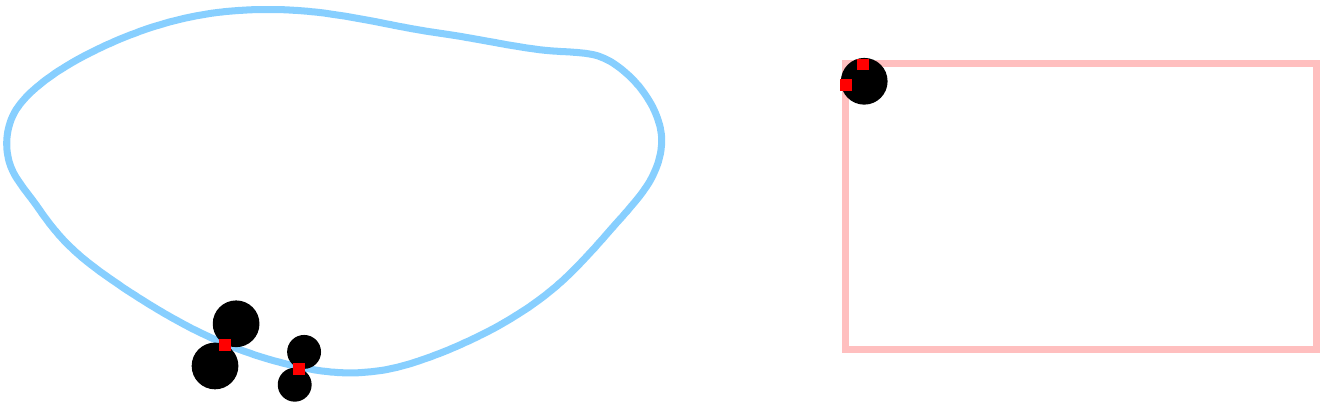_t}
      \caption{Positive and Non-positive Reach.}\label{reach}
 \end{center}
\end{figure}

As mentioned above, if $\partial E$ is $C^{1,1}$, then it has positive
reach (see Remark 4.20 in \cite{federer1959curvature}). That is, if
for all $x\in \partial E$, there is a neighborhood of $x$, $U_x\subset
\R^n$, such that after a suitable change of coordinates, there is a
$C^{1,1}$ function $f:\R^{n-1}\rightarrow\R$ such that $\partial E\cap
U_x$ is the graph of $f$. (Recall that a function is $C^{1,1}$ if its
derivative is Lipschitz continuous.)  This implies, among other
things, that the (symmetric) second fundamental form of $\partial E$
exists $\Hd^{n-1}$-almost everywhere on $\partial E$. The fact that
$\partial E$ is $C^{1,1}$ implies that at $\Hd^{n-1}$-almost every
point of $\partial E$, the $n-1$ principal curvatures $\kappa_i$ of
our set exist and $|\kappa_i| \leq \frac{1}{\reach(\partial E)}$ for
$1\leq i \leq n-1$.

We will use this fact to determine a bound for the $(n-1)$-dimensional
change in area as the boundary of our set is expanded outwards or
contracted inwards by $\epsilon$ (see Figure \ref{lep}, Diagram
1). Let us first look at this in $\R^2$ by examining the following
ratios of lengths of expanded or contracted arcs for sectors of a ball
in $\R^2$ as shown in Diagram 2 in Figure \ref{lep} below.

 \begin{figure}[H]
\begin{center}
      \input{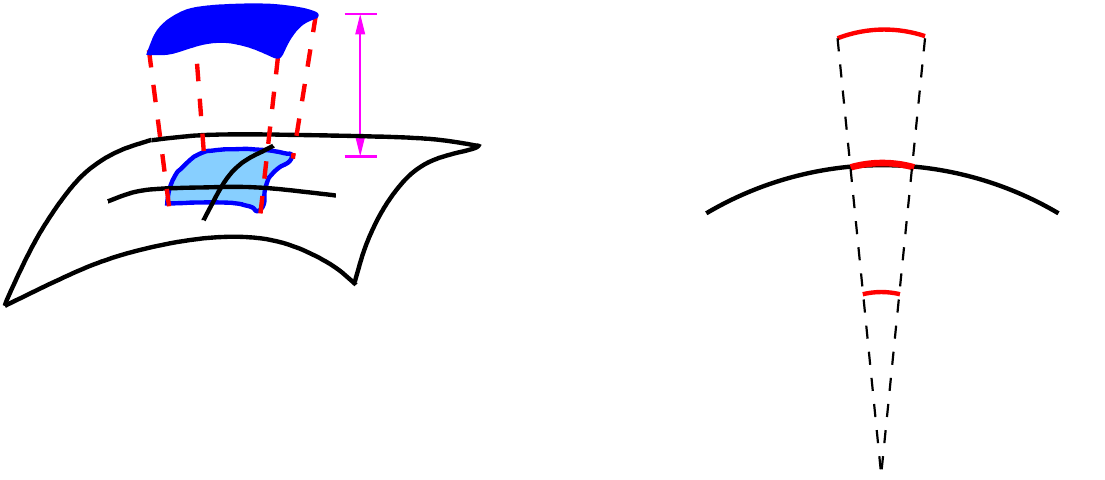_t}
      \caption{Moving Out and Sweeping In.}\label{lep}
 \end{center}
\end{figure}
 
\[
\frac{\mathcal{H}^1(l_\epsilon)}{\mathcal{H}^1(l)}=\frac{(r+\epsilon)\theta}{r\theta}=1+\frac{\epsilon}{r}=1+\epsilon \kappa
\]
\[
\frac{\mathcal{H}^1(l_{-\epsilon})}{\mathcal{H}^1(l)}=\frac{(r-\epsilon)\theta}{r\theta}=1-\frac{\epsilon}{r}=1-\epsilon \kappa,
\]
where $\kappa$ is the principal curvature of the circle (the boundary of the 2-ball), which we can think of as  defining the reach of a set $E\subset \R^2$ with $C^{1,1}$-smooth boundary. \\

The Jacobian for the normal map pushing in or out by $\epsilon$,
which by the area formula is the factor by which the area
changes, is given by $\prod_{i=1}^{n-1}(1\pm \epsilon \kappa_i)$ (see
Figure~\ref{lep}, Diagram 1). If we define $\oldhat{\kappa}\equiv
\max\{|\kappa_1|,|\kappa_2|,\ldots, |\kappa_{n-1}|\}$, then 
we have the following ratios: 
\[
\text{Max Fractional Increase of $\mathcal{H}^{n-1}$ boundary ``area" Moving Out:}  
\]
\[
\prod_{i=1}^{n-1}(1+\epsilon \kappa_i) \leq (1+\epsilon \oldhat{\kappa})^{n-1}.
\]

\[
\text{Max Fractional Decrease of $\mathcal{H}^{n-1}$ boundary ``area" Sweeping In:}  
\]
\[
\prod_{i=1}^{n-1}(1-\epsilon \kappa_i) \geq (1-\epsilon \oldhat{\kappa})^{n-1}.
\]
\begin{rem}
  Notice that $\oldhat{\kappa} = \frac{1}{\reach(\partial E)}$.
\end{rem}

For a ball, we readily find the value of the ratio

\begin{eqnarray}
  \label{eq:ball-ratio}
\frac{\mathcal{L}^n(B(0,r+\epsilon))}{\mathcal{L}^n(B(0,r))} &=& \left( \frac{r+\epsilon}{r} \right)^n\nonumber \\
& = &   (1+\epsilon\kappa)^n \ \ \text{ (setting $\delta = \epsilon\kappa$)}\nonumber \\
& = &   (1+\delta)^n,
\end{eqnarray}
where $\kappa = \frac{1}{r}$ is the curvature of the ball along any geodesic.

Now we calculate the bound we are interested in for $E$, assuming
$\partial E$ is $C^{1,1}$. Define $E_\epsilon \subset \R^n \equiv \{x
\mid d(x,E)<\epsilon\}$.  We first compute a bound for

\begin{align}
\frac{\mathcal{L}^n(E_\epsilon)}{\mathcal{L}^n(E)} &= \frac{\mathcal{L}^n(E)+\mathcal{L}^n(E_\epsilon \setminus E)}{\mathcal{L}^n(E)} \nonumber \\
&= 1 + \frac{\mathcal{L}^n(E_\epsilon \setminus E)}{\mathcal{L}^n(E)}. \label{ratio}
\end{align}

Since $\kappa_i$ is a function of $x\in \partial E$ defined $\Hd^{n-1}$-almost everywhere, we may set up the integral below over $\partial E$ and do the actual computation over $\partial E \setminus  K$, where $K\equiv\{$the set of measure 0 where $\kappa_i $ is not defined$\}$. Computing bounds for the numerator and denominator separately in the second term in (\ref{ratio}), we find, by way of the Area Formula \cite{morgan-2008-geometric},

\begin{align}
\mathcal{L}^n(E_\epsilon \setminus E) &= \int^\epsilon_0 \int_{\partial E} \prod_{i=1}^{n-1}(1+r \kappa_i)d\mathcal{H}^{n-1}dr \nonumber \\
&\leq \int^\epsilon_0 \int_{\partial E} (1+r \oldhat{\kappa})^{n-1}d\mathcal{H}^{n-1}dr  \nonumber \\
&= \mathcal{H}^{n-1}(\partial E) \left. \frac{(1+r \oldhat{\kappa})^n}{n\oldhat{\kappa}}  \right\vert^\epsilon_0 \nonumber \\
&= \mathcal{H}^{n-1}(\partial E) \left( \frac{(1+\epsilon \oldhat{\kappa})^n}{n\oldhat{\kappa}}-\frac{1}{n\oldhat{\kappa}}\right) \label{numbound}
\end{align}
and
\begin{align}
\mathcal{L}^n(E) &\geq \int^{r_0}_0 \int_{\partial E} \prod_{i=1}^{n-1}(1-r \kappa_i)d\mathcal{H}^{n-1}dr \nonumber \\
&\geq \int^{r_0}_0 \int_{\partial E} (1-r \oldhat{\kappa})^{n-1}d\mathcal{H}^{n-1}dr  \nonumber \\
&= \mathcal{H}^{n-1}(\partial E) \left. \frac{-(1-r \oldhat{\kappa})^n}{n\oldhat{\kappa}}  \right\vert^{r_0}_0 \nonumber \\
&= \frac{\mathcal{H}^{n-1}(\partial E)}{n\oldhat{\kappa}}, \ \ \ \text{when} \ r_0=\frac{1}{\oldhat{\kappa}}. \label{denombound}
\end{align}

From \ref{ratio}, \ref{numbound}, and \ref{denombound}, we have
\begin{align}
\frac{\mathcal{L}^n(E_\epsilon)}{\mathcal{L}^n(E)} &\leq 1+ \frac{\mathcal{H}^{n-1}(\partial E) \left( \frac{(1+\epsilon \oldhat{\kappa})^n}{n\oldhat{\kappa}}-\frac{1}{n\oldhat{\kappa}}\right)}{\frac{\mathcal{H}^{n-1}(\partial E)}{n\oldhat{\kappa}}} \nonumber \\
&= (1+\epsilon \oldhat{\kappa})^n \ \  \ (\text{setting} \ \delta = \epsilon \oldhat{\kappa} \nonumber) \\
&= (1+\delta)^n.
\end{align}
From this we get that
\[ \mathcal{L}^n(E_\epsilon) \leq  (1+\epsilon\oldhat{\kappa})^n  \mathcal{L}^n(E)\]
so that 
\[\mathcal{L}^n(\mathcal{C}_{d(\epsilon)}^E) \leq (1+\epsilon\oldhat{\kappa})^n  \mathcal{L}^n(E) \]
where $d(\epsilon) = \log_2(\frac{\sqrt{n}}{\epsilon})$ is found by
solving $\sqrt{n}\frac{1}{2^d} = \epsilon$.

Thus, when $\partial E$ is smooth enough to have positive reach, we
find a nice bound of the type in Equation~(\ref{B}), with a precisely
known dependence on curvature.

\section{A Boundary Conjecture}

What can we say about boundaries? Can we bound
\[
\frac{\mathcal{H}^{n-1}(\partial \mathcal{C}^E_d)}{\mathcal{H}^{n-1}(\partial E)}?
\]

 \begin{figure}[H]
\begin{center}
      \scalebox{1.0}{\input{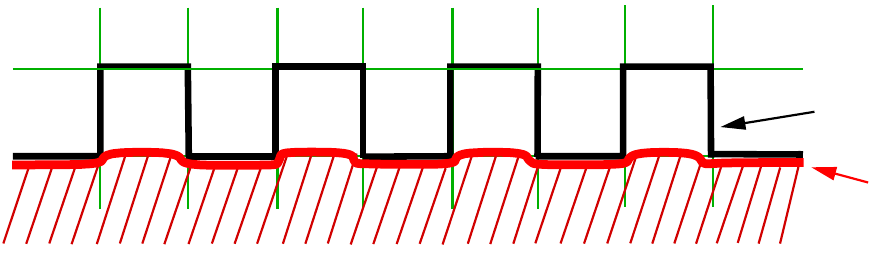_t}}
      \caption{Cubes on the Boundary.}\label{cube}
 \end{center}
\end{figure}
 
\begin{con}\label{con}
If $E\subset \R^n$ is compact and $\partial E$ is $C^{1,1}$, 
\[
\limsup_{d \rightarrow \infty} \frac{\mathcal{H}^{n-1}(\partial \mathcal{C}^E_d)}{\mathcal{H}^{n-1}(\partial E)}\leq n.
\]
\end{con}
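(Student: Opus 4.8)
The plan is to split the cubical boundary according to the orientation of its faces and to compare each orientation with a projection of $\partial E$. Every face of $\partial\mathcal{C}^E_d$ lies in a hyperplane perpendicular to one of the coordinate directions $e_1,\dots,e_n$, so, writing $A_j$ for the total $\mathcal{H}^{n-1}$-measure of the faces perpendicular to $e_j$, we have $\mathcal{H}^{n-1}(\partial\mathcal{C}^E_d)=\sum_{j=1}^n A_j$. The conjecture then follows if, for each fixed $j$,
\[
\limsup_{d\to\infty} A_j \leq \mathcal{H}^{n-1}(\partial E),
\]
since summing these $n$ inequalities produces exactly the factor $n$. It is worth noting up front that the $n$ (rather than the sharper $\sqrt{n}$ one expects from $\|\nu\|_1\leq\sqrt{n}\,\|\nu\|_2$) is precisely the price of estimating each direction separately by $|\nu_j|\leq 1$; the circle in $\R^2$ already gives ratio $4/\pi$, so the bound is far from tight, which is what gives us room to be crude.

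\noindent To control a single $A_j$ I would use the integral-geometric (Cauchy) projection identity. Fix $j$, foliate $\R^n$ into lines (``columns'') parallel to $e_j$, and index them by the $(n{-}1)$-dimensional dyadic base cells $Q$ of side $1/2^d$ in $e_j^\perp$. A face of $\partial\mathcal{C}^E_d$ perpendicular to $e_j$ over $Q$ records a \emph{transition}, as one moves along that column, between cubes meeting $E$ and cubes missing $E$; hence
\[
A_j=\sum_Q T_j(Q)\,\Bigl(\tfrac{1}{2^d}\Bigr)^{n-1},
\]
where $T_j(Q)$ is the number of such transitions. For a single affine piece of boundary with unit normal $\nu$, the monotone staircase has $T_j=2$ and contributes $\mathcal{H}^{n-1}$-mass converging to $\int|\nu_j|\,d\mathcal{H}^{n-1}$, so the model answer for each $A_j$ is $\int_{\partial E}|\nu_j|\,d\mathcal{H}^{n-1}$. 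The heart of the matter is therefore to show that, for almost every column, $T_j(Q)\to 2k(y)$ as $d\to\infty$, where $k(y)$ is the number of intervals in which the column over $y$ meets $E$; then
\[
A_j\longrightarrow \int_{e_j^\perp} 2k(y)\,dy=\int_{\partial E}|\nu_j|\,d\mathcal{H}^{n-1}\leq \mathcal{H}^{n-1}(\partial E),
\]
the middle equality being the Cauchy formula for the projection $\pi_j$ and the last step being $|\nu_j|\leq 1$.

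\noindent The main obstacle is exactly this limit $T_j(Q)\to 2k(y)$, i.e. ruling out \emph{spurious} transitions created at the scale of the cubes. Two mechanisms could inflate $T_j$ beyond the true crossing count: (i) portions of $\partial E$ nearly parallel to $e_j$ (where $\nu_j\approx 0$), and (ii) non-convex columns in which $E$ meets the line in several intervals that are thin or mutually close. This is where the $C^{1,1}$ / positive-reach hypothesis is essential: the bound $|\kappa_i|\leq 1/\reach(\partial E)$ established above forces $\partial E$ to be a single Lipschitz graph inside each sufficiently small cube and keeps distinct sheets of $\partial E$ a definite distance apart, so that once $1/2^d\ll \reach(\partial E)$ the cover creates no new runs of $E$-meeting cubes and $T_j(Q)$ cannot exceed $2k(y)$ except on a set of columns whose measure, together with the near-tangency set $\{\nu_j\approx 0\}$, contributes $o(1)$. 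Making this estimate quantitative and uniform in $d$, and then justifying the passage of the limit through the sum (for instance by dominated convergence, using a reach-dependent pointwise bound on $T_j$), is the technical crux. The conversion of cube counts near $\partial E$ into surface measure is supplied by the identity $\mathcal{M}^{n-1}(\partial E)=\mathcal{H}^{n-1}(\partial E)$ for the closed, $(n{-}1)$-rectifiable set $\partial E$ (Theorem~\ref{mink-equals-hausdorff}). I expect that carrying this through actually yields the stronger constant $\sqrt{n}$, with the stated bound $n$ a comfortable and robust consequence.
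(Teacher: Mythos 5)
Your orientation decomposition $\mathcal{H}^{n-1}(\partial\mathcal{C}^E_d)=\sum_{j=1}^n A_j$ with a per-direction comparison is a genuinely different skeleton from the paper's argument (the paper zooms in at a point of $\partial E$, bounds the local ratio by $2+\epsilon$ via a projection in the transversal case, treats the tangential case separately, and assembles by compactness; note the statement is only a conjecture there, with a sketch for $n=2$). But your central mechanism is broken: the claim that $T_j(Q)\to 2k(y)$ for a.e.\ column, with the near-tangency set contributing $o(1)$, is false, and unavoidably so because the $\limsup$ runs over all $d$ against a \emph{fixed} dyadic lattice. Take $n=2$ and let $\partial E$ contain a nearly horizontal arc of length $\approx L$ hovering about a dyadic height $h$, oscillating with amplitude $\ll 4^{-d}$ and period $2\cdot 2^{-d}$ so that it pokes above $h$ exactly over alternate columns; the curvature, hence $1/\reach(\partial E)$, can be made as small as you like, and the paper's Case 2(b) asserts such alignment can be arranged along an unbounded set of scales for a single $E$. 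For $j=1$ all spurious transitions sit in the \emph{single} row of cubes at height $h$ --- a set of columns of measure $2^{-d}\to 0$ --- but there $T_1(Q)\sim L2^{d}$, so this vanishing set of columns contributes $\approx L\approx\mathcal{H}^1(\partial E)$ to $A_1$, while $\int_{\partial E}|\nu_1|\,d\mathcal{H}^1$ is as small as you like. Consequently: there is no reach-dependent pointwise bound on $T_j(Q)$ (it blows up like $2^d$ on aligned rows), dominated convergence cannot be applied, the exceptional contribution is $O(\mathcal{H}^{n-1}(\partial E))$ rather than $o(1)$, and the identity $A_j\to\int_{\partial E}|\nu_j|\,d\mathcal{H}^{n-1}$ fails along aligned subsequences.

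The same phenomenon kills your closing speculation that the method should yield $\sqrt{n}$: lattice alignment beats the $\|\nu\|_1\le\sqrt{n}\,\|\nu\|_2$ heuristic. In $\R^n$, crenellating a flat sheet hovering at a dyadic height in a checkerboard pattern at scale $2^{-d}$ makes the sheet contribute essentially its full area to \emph{every} $A_j$ simultaneously ($A_n$ from the horizontal faces, and each transverse $A_j$ from protrusion side-faces), driving the ratio toward $n$; this is exactly the content of the paper's tangential Case 2 and is why the conjectured constant is $n$, not $\sqrt{n}$. What is plausibly salvageable in your framework is the weaker per-direction inequality $\limsup_{d\to\infty}A_j\le(1+o(1))\,\mathcal{H}^{n-1}(\partial E)$ --- which does suffice after summing over $j$, and which holds with near-equality in the examples above --- but it must be proved in an alignment-robust way: e.g.\ show that once $2^{-d}\ll\reach(\partial E)$, every sheet of $\partial E$ over a base cell $Q$ is a Lipschitz graph spanning the cell, hence has $\mathcal{H}^{n-1}$-measure at least $\mathcal{L}^{n-1}(Q)$, and accounts for at most two transitions in that column, then do the bookkeeping for sheets spanning neighboring cells. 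That per-sheet counting, not a.e.-column convergence to the Cauchy projection formula, is the actual crux, and your proposal as written does not contain it.
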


\begin{proof}[Brief Sketch of Proof for $n=2$] $ $
  \begin{enumerate}
 \item Since $\partial E$ is $C^{1,1}$, we can zoom in far enough
    at any point $x\in\partial E$ so that it looks flat.
  \item Let $C$ be a cube in the cover $\mathcal{C}(E,d)$ that intersects
    the boundary near $x$ and has faces in the boundary
    $\partial\mathcal{C}^E_d$. Define $F = \partial C \cap \partial
    \mathcal{C}^E_d$.
  \item (Case 1) Assume that the tangent at $x$, $T_x\partial E$, is
    not parallel to either edge direction of the cubical cover (see Figure~\ref{cube2}).
    \begin{enumerate}
    \item Let $\Pi$ be the projection onto the horizontal axis and notice
      that $\frac{\Hd^1(F)}{\Pi(F)} \leq 2+\epsilon$ for any epsilon.
    \item This is stable to perturbations which is important since the actual piece
  of the boundary $\partial E$ we are dealing with is not a straight line.
    \end{enumerate}
    \item (Case 2) Suppose that the tangent at $x$, $T_x\partial E$,
      is parallel to one of the two faces of
      the cubical cover, and let $U_x$ be a neighborhood of
      $x\in \partial E$.
    \begin{enumerate}
    \item Zooming in far enough, we see that the cubical boundary can
      only oscillate up and down so that the maximum ratio for any
      horizontal tangent is (locally) $2$.
    \item But we can create a sequence of examples that attain ratios
      as close to 2 as we like by finding a
      careful sequence of perturbations that attains a ratio locally
      of $2-\epsilon$ for any $\epsilon$ (see Figure~\ref{cube}).
    \item That is, we can create perturbations that, on an unbounded
      set of $d$'s, $\{d_i\}_{i=1}^\infty,$ yield a ratio
      $\frac{\Hd^1(\mathcal{C}^E_{d_i}\cap \, U_x)}{\partial E} >
      2-\epsilon$, and we can send $\epsilon\rightarrow 0$.
    \end{enumerate}
   \item Use the compactness of $\partial E$ to put this all together
    into a complete proof.
  \end{enumerate}
\end{proof}

 \begin{figure}[H]
\begin{center}
      \scalebox{0.75}{\input{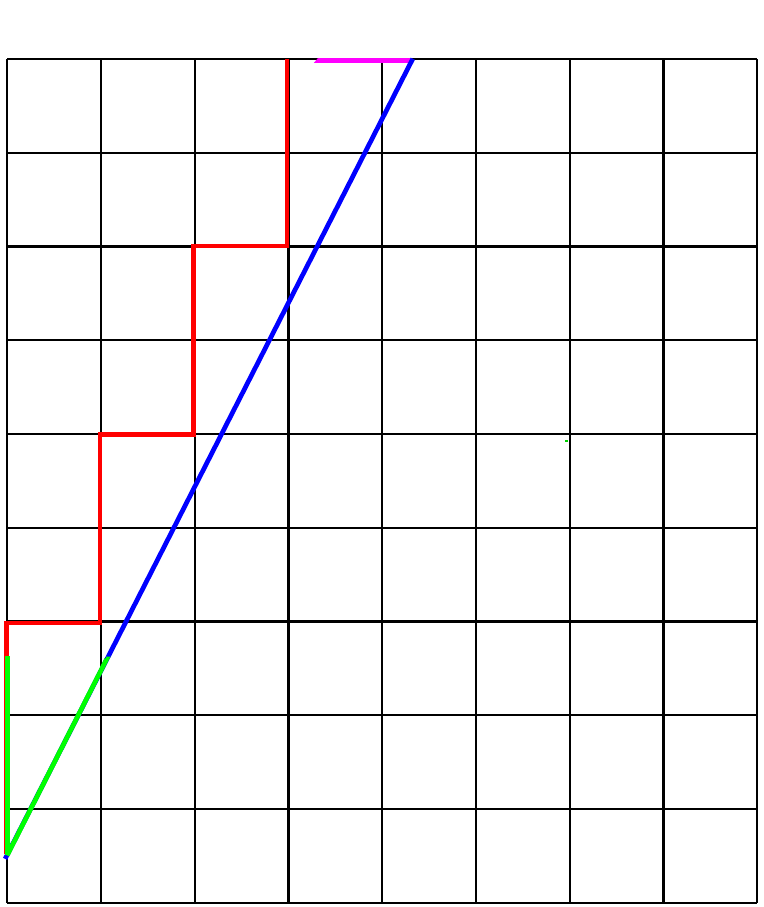_t}}
      \caption{The case in which $\theta$, the angle between $T_x \partial E$  and the $x$-axis, is neither $0$ nor $\pi/2$.}\label{cube2}
 \end{center}
\end{figure}

\begin{prob} Suppose we exclude $C$'s that contain less than some
  fraction $\theta$ of $E$ (as defined in Conjecture \ref{con}) from the cover to get the reduced cover
  $\hat{\mathcal{C}}_d^E$.  In this case, what is the optimal bound
  $B(\theta)$ for the ratio of boundary measures
    \[\limsup_{d\rightarrow\infty}\frac{\Hd^{n-1}(\partial
      \hat{\mathcal{C}}_d^E)}{\Hd^{n-1}(\partial E)} \leq B(\theta)?\] 
    \end{prob}

\section{Other Representations}

\subsection{The Jones' $\beta$ Approach}\label{jonessection2}

As mentioned above, another approach to representing sets in $\R^n$,
developed by Jones \cite{jon90}, and generalized by Okikiolu
\cite{oki92}, Lerman \cite{ler03}, and Schul \cite{sch07}, involves
the question of under what conditions a bounded set $E$ can be
contained within a rectifiable curve $\Gamma$, which Jones likened to
the Traveling Salesman Problem taken over an infinite set. While Jones
worked in $\mathbb{C}$ in his original paper, the work of Okikiolu,
Lerman, and Schul extended the results to $\R^n \ \forall n\in
\mathbb{N}$ as well as infinite dimensional space.

Recall that a compact, connected set $\Gamma \subset \R^2$ is
rectifiable if it is contained in the image of a countable set of
Lipschitz maps from $\R$ into $\R^2$, except perhaps for a set of
$\Hd^1$ measure zero. We have the result that if $\Gamma$ is compact
and connected, then $l(\Gamma)=\mathcal{H}^1(\Gamma)<\infty$ implies it
is rectifiable (see pages 34 and 35 of
\cite{falconer-1986-geometry}). 

Let $W_C$ denote the width of the thinnest cylinder containing the set $E$ in the dyadic $n$-cube $C$ (see Figure \ref{jones2}), and define
the $\beta$ number of $E$ in $C$ to be
\[
\beta_E(C)\equiv \frac{W_C}{l(C)}.
\]
\begin{figure}[H]
\begin{center}
\scalebox{.6}{
      \input{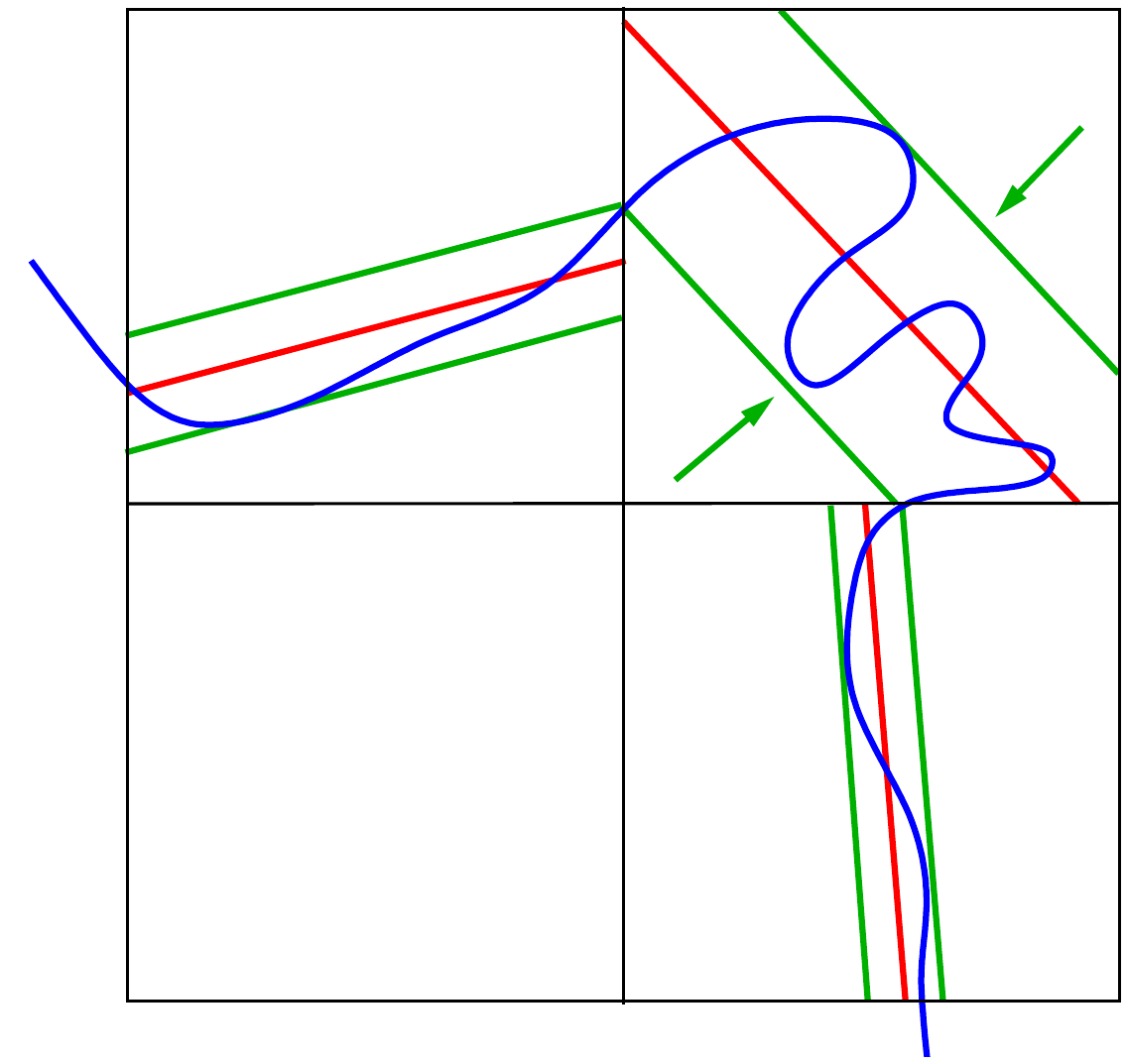_t}}
      \caption{Jones' $\beta$ Numbers and $W_C$. Each of the two green lines in a cube $C$ is an equal distance away from the red line and is chosen so that the green lines define the thinnest cylinder containing $E\cap C$. Then the red lines are varied over all possible lines in  $C$ to find that red line whose corresponding cylinder is the thinnest of all containing cylinders. In this sense, the minimizing red lines are the best fit to $E$ in each $C$.}\label{jones2}
 \end{center}
\end{figure}
Jones' main result is this theorem:
\begin{thm}\cite{jon90}\label{jones}
  Let $E$ be a bounded set and $\Gamma$ be a connected set both in
  $\R^2$.  Define $\beta_\Gamma(C)\equiv \frac{W_C}{l(C)},$ where
  $W_C$ is the width of the thinnest cylinder in the $2$-cube $C$
  containing $\Gamma.$ Then, summing over all possible
  $C$, $$\beta^2(\Gamma)\equiv \sum_C (\beta_\Gamma(3C))^2l(C)<\eta
  \,l(\Gamma)<\infty, \text{ where } \eta \in \R.$$ \\ Conversely, if
  $\beta^2(E)<\infty$ there is a connected set $\Gamma,$ with $E
  \subset \Gamma,$ such that \[l(\Gamma) \leq (1+ \delta) \diam(E) +
  \alpha_\delta\beta^2(E),\] \text{ where } $\delta>0$ \text{ and }
  $\alpha_\delta = \alpha(\delta)\in \R.$
\end{thm}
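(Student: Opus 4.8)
The plan is to prove the two directions separately, but both rest on a single geometric observation special to curves: in $\R^2$, deviation from flatness as measured by $\beta$ is paid for by \emph{excess} arclength of order $\beta^2 l(C)$, not $\beta\, l(C)$. This Pythagorean gain is exactly what makes the squared quantity $\sum_C \beta(3C)^2 l(C)$ the correct object, and it is the engine behind both inequalities. I would isolate this as a local lemma before attacking either direction.

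For the necessity direction (the bound $\beta^2(\Gamma) < \eta\, l(\Gamma)$), I would first establish the local estimate. Fix a dyadic cube $C$ and examine $\Gamma \cap 3C$. Since $W_C = \beta_\Gamma(3C)\, l(C)$ is the width of the thinnest containing cylinder, there are points $p, q, s \in \Gamma \cap 3C$ with $s$ lying at distance comparable to $W_C$ from the chord through $p$ and $q$ realizing the best-fit line. The subarc of $\Gamma$ joining these points then has length at least $|p-s| + |s-q| \geq |p-q| + c\, W_C^2/l(C) = |p-q| + c\,\beta_\Gamma(3C)^2 l(C)$, so each non-flat cube forces a definite length surplus proportional to $\beta^2 l(C)$. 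The genuinely delicate part is the global summation: I would add these surpluses over all dyadic cubes and argue that a rectifiable curve cannot accumulate more than $\eta\, l(\Gamma)$ total surplus. Because the cubes of a fixed scale are essentially disjoint, within one scale the surpluses telescope against the length; the real work is controlling the interaction across the infinitely many scales, which requires a stopping-time (corona-type) decomposition of the cubes into families on which the best-fit lines vary slowly, so that per-scale surpluses do not recount the same arclength. This cross-scale bookkeeping is where I expect the main difficulty on this side to lie.

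For the sufficiency direction (constructing $\Gamma$), I would assume $\beta^2(E) < \infty$ and build a nested sequence of connected polygonal curves $\Gamma_0 \subset \Gamma_1 \subset \cdots$ capturing $E$ at successively finer scales. Take $\Gamma_0$ to be the segment joining the two points realizing $\diam(E)$, of length at most $\diam(E)$. To pass from scale $d$ to scale $d+1$, inspect each dyadic cube $C$ at scale $d$ meeting $E$: where $E$ bulges away from the current curve by $W_C = \beta_E(C)\, l(C)$, splice in a short detour reaching the offending points. The crucial accounting is that a detour capturing a deviation of width $W_C$ across a cube of size $l(C)$ can again be realized with excess length only $O(\beta_E(C)^2 l(C))$, by the same Pythagorean estimate run in reverse. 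Summing the per-step increments gives $l(\Gamma_d) \leq \diam(E) + K \sum_C \beta_E(C)^2 l(C)$, uniformly in $d$; passing to the limit (the uniformly bounded lengths force Hausdorff convergence to a connected set $\Gamma \supset E$ of finite length, hence rectifiable by the length criterion recalled above) yields the claim.

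The appearance of $(1+\delta)$ and $\alpha_\delta$ reflects a real tradeoff in the splicing: keeping the leading term as close to $\diam(E)$ as possible forces the detours to be attached more conservatively, which inflates the constant $\alpha_\delta$ multiplying $\beta^2(E)$, so that $\alpha_\delta \to \infty$ as $\delta \to 0$. Making this tradeoff quantitative, and simultaneously verifying that connectivity is preserved at every splicing step while the added length stays controlled by the $\beta^2$-sum rather than the (possibly divergent) $\beta$-sum, is the principal obstacle on the construction side.
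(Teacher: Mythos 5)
You should know at the outset that the paper contains no proof of this statement: Theorem~\ref{jones} is quoted from Jones~\cite{jon90} with a citation and nothing more, so there is no in-paper argument to compare against, and your sketch has to stand on its own as an outline of Jones' Traveling Salesman theorem. On that score you have correctly reproduced the known architecture: the local ``Pythagorean'' observation that non-flatness in a cube costs excess length of order $\beta_\Gamma(3C)^2\,l(C)$ rather than $\beta_\Gamma(3C)\,l(C)$, a global summation over dyadic cubes for the direct direction, a farthest-insertion/splicing construction for the converse, and the correct reading of the $(1+\delta)$ versus $\alpha_\delta$ tradeoff, with $\alpha_\delta \to \infty$ as $\delta \to 0$.

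As a proof, however, there are genuine gaps, two of which you flag yourself without resolving. First, the local lemma is incomplete as stated: the surplus estimate $|p-s|+|s-q|-|p-q| \geq c\,W_C^2/l(C)$ requires $|p-q|$ comparable to $l(C)$, so cubes in which $\Gamma \cap 3C$ has small diameter need separate treatment, and you must first know that the three points occur in order along a subarc, which for a general connected set of finite $\Hd^1$ measure requires the parametrization theorem for continua (the rectifiability criterion the paper recalls from \cite{falconer-1986-geometry}). Second, the cross-scale double counting in the direct direction is the heart of the theorem, and ``a stopping-time (corona-type) decomposition\dots is where I expect the main difficulty to lie'' is an acknowledgment of the difficulty, not an argument; note that the classical resolution is a telescoping (martingale-type) argument on inscribed polygons --- the length increment in passing from the scale-$d$ polygon to the scale-$(d+1)$ polygon dominates $c\sum \beta^2 l$ over the scale-$d$ cubes, and summing over $d$ telescopes against $l(\Gamma)$ --- which handles the recounting problem without any corona decomposition. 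Third, on the construction side, the claim that each splice costs only $O(\beta_E^2 l)$ is valid only when the detour's endpoints are nearly collinear with the existing curve, which is precisely what the careful net-by-net bookkeeping in Jones' construction arranges; connectivity at each stage and the uniform bound $l(\Gamma_d) \leq (1+\delta)\diam(E) + \alpha_\delta\,\beta^2(E)$ are asserted rather than established. In short: you have the right skeleton and correctly located the hard steps, but both central difficulties are deferred rather than executed, so this is a plan for a proof rather than a proof.
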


Jones' main result, generalized to $\R^n$, is that a bounded set $E \subset \R^n$ is contained in a rectifiable curve $\Gamma$ if and only if 
\[
\beta^2(E)\equiv \sum_C (\beta_E(3C))^2l(C)<\infty,
\]
where the sum is taken over all dyadic cubes. 

Note that each $\beta$ number of $E$ is calculated over the dyadic
cube $3C$, as defined in Section \ref{union}.  Intuitively, we see
that in order for $E$ to lie within a rectifiable curve $\Gamma$, $E$
must look flat as we zoom in on points of $E$ since $\Gamma$ has
tangents at $\mathcal{H}^1$-almost every point $x\in\Gamma$. Since
both $W_C$ and $l(C)$ are in units of length, $\beta_E(C)$ is a scale-invariant measure of 
the flatness of $E$ in $C$. In higher dimensions, the
analogous cylinders' widths and cube edge lengths are also divided to
get a scale-invariant $\beta_E(C)$.

The notion of local linear approximation has been explored by many
researchers. See for example the work of  Lerman and
collaborators~\cite{chen-2009-spectral,arias-2011-spectral,zhang-2012-hybrid,arias-2017-spectral}. While
distances other than the sup norm have been considered when
determining closeness to the approximating line, see~\cite{ler03},
there is room for more exploration there. In the section below, 
\emph{Problems and Questions}, we suggest an idea involving the
multiscale flat norm from geometric measure theory.

\subsection{A Varifold Approach}\label{Var2}

As mentioned above, a third way of representing sets in $\R^n$ uses
\emph{varifolds}.  Instead of representing $E\subset \R^n$ by working
in $\R^n$, we work in the \emph{Grassmann Bundle}, $\R^n\times
G(n,m)$. Advantages include, for
example, the automatic encoding of tangent information directly into
the representation. By building into the representation this tangent
information, we make set comparisons where we care about tangent
structure easy and natural.
\begin{defn}[Grassmannian]
The $m$-dimensional Grassmannian in $\R^n$, \[G(n,m) =G(\R^n,m),\]
is the set of all $m$-dimensional planes through the origin.
\end{defn}
For example, $G(2,1)$  is the space of all lines through the origin
in $\R^2$, and $G(3,2)$ is the space of all planes through the origin
in $\R^3$. The Grassmann bundle $\R^n \times G(n,m)$ can be thought
of as a space where $G(n,m)$ is attached to each point in $\R^n$.

\begin{defn}[Varifold]
A varifold is a \emph{Radon measure} $\mu$ on the
Grassmann bundle $\R^n \times G(n,m)$.
\end{defn}

Suppose $\pi:(x,g)\in \R^n \times G(n,m) \rightarrow x$.  One of the
most common appearances of varifolds are those that arise from
rectifiable sets $E$. In this case the measure $\mu_E$ on $\R^n
\times G(n,m)$ is the pushforward of $m$-Hausdorff measure on $E$ by
the tangent map $T:x\rightarrow (x,T_xE)$.

Let $E\subset \R^n$ be an ($\Hd^{m},m$)-rectifiable set (see
Definition \ref{rect}). We know the approximate $m$-dimensional
tangent space $T_xE$ exists $\mathcal{H}^m$-almost everywhere since
$E$ is ($\Hd^{m},m$)-rectifiable, which in turn implies that, except
for an $\Hd^m$-measure 0 set, $E$ is contained in the union of the images of
countably many Lipschitz functions from $\R^m$ to $\R^n$.

The measure of $A\subset \R^n \times G(n,m)$ is given by $\mu(A)
  = \Hd^m(T^{-1}\{A\})$.  Let $S\equiv \{(x, T_xE) \, \vert \, x\in
  E\}$, the section of the Grassmann bundle defining the
  varifold. $S$, intersected with each fiber $\{x\}\times
  G(n,m)$, is the single point $(x, T_xE)$, and so we could just as
  well use the projection $\pi$ in which case we would have $\mu_E(A)
  = \Hd^m(\pi(A\cap S))$.
\begin{defn}
A \emph{rectifiable varifold} is a radon measure $\mu_E$ defined on an ($\Hd^{m},m$)-rectifiable set $E\subset \R^n$. Recalling $S\equiv \{(x, T_xE) \, \vert \, x\in E\}$, let $A \subset \R^n \times G(n,m)$ and define
\[
\mu_E({A})= \Hd^m(\pi(A \cap S)).
\]
\end{defn}

We will call $E=\pi (S)$ the ``downstairs'' representation of $S$ for
any $S\subset\R^n \times G(n,m)$, and we will call $S =
T(E)\subset\R^n\times G(n,m)$ the ``upstairs'' representation of any
rectifiable set $E$, where $T$ is the tangent map over the rectifiable
set $E$.

\bigskip

\begin{figure}[H]
\begin{center}
      \input{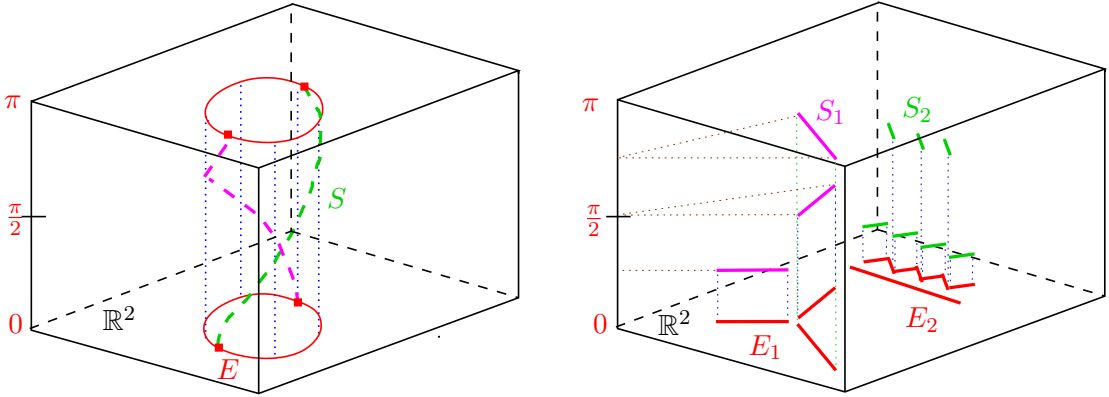}
      \caption{Working Upstairs.}\label{var1ctoo}
 \end{center}
\end{figure}

Figure~\ref{var1ctoo}, repeated from above, illustrates how the
tangents are built into this representation of subsets of $\R^n$,
giving us a sense of why this representation might be useful.  Suppose
we have three line segments almost touching each other, i.e. appearing
to touch as subsets of $\R^2$. The upstairs view puts each segment at
a different height corresponding to the angle of the segment. So,
these segments are not close in any sense in $\R^2\times G(n,m)$. Or
consider a straight line segment and a very fine sawtooth curve that
may look practically indistinguishable, but will appear drastically
different upstairs.

We can use varifold representations in combination with a cubical
cover to get a quantized version of a curve that has tangent
information as well as position information. If, for example, we cover
a set $S \subset \R^2 \times G(2,1)$ with cubes of edge length
$\frac{1}{2^d}$ and use this cover as a representation for $S$, we
know the position and angle to within $\frac{\sqrt{3}}{2^{d+1}}$. In other
words, we can approximate our curve $S \subset \R^2 \times G(2,1)$ by the
union of the centers of the cubes (with edge length $\frac{1}{2^d}$)
intersecting $S$. This simple idea seems to merit further exploration.

\section{Problems and Questions}

\begin{prob}
Find a smooth $\partial E$, with $E \subset \R^n$, such that
\[
\mathcal{H}^{n-1}(\partial \mathcal{C}^E_d) / \mathcal{H}^{n-1}(\partial E)=0 \ \forall d.
\]
\end{prob}
\textbf{Hint:} Look at unbounded $E \subset\R^2$ such that $\mathcal{L}^2(E^c) < \infty$.

\begin{prob}
Suppose that $E$ is open and $\Hd^{n-1}(\partial E) < \infty$. Show that if the \textbf{reach} of $\partial E $ is positive, then
\[
\liminf_{d \rightarrow \infty} \frac{\mathcal{H}^{n-1}(\partial \mathcal{C}^E_d)}{\mathcal{H}^{n-1}(\partial E)}\geq 1.
\]
\end{prob} \textbf{Hint:} First show that $\partial E$ has
  unique inward and outward pointing normals. (Takes a bit of work!)
  Next, examine the map $F: \partial E \rightarrow \R^n$, where
  $F(x)=x+\eta(x) N(x)$, $N(x)$ is the normal to $\partial E$ at $x$,
  and $\eta(x)$ is a positive real-valued function chosen so that
  \emph{locally} $F(\partial E) = \partial \mathcal{C}^E_d$. Use the
  Binet-Cauchy Formula to find the Jacobian, and then apply the Area
  Formula. To do this calculation, notice that at any point
  $x_0\in\partial E$ we can choose coordinates so that
  $T_{x_0}\partial E$ is horizontal (i.e. $N(x_0) = e_n$).  Calculate
  using $F: T_{x_0}\partial E = \R^{n-1} \rightarrow \R^n$ where
  $F(x)=x+\eta(x) N(x)$. (See Chapter 3 of \cite{evans-1992-1} for the
  Binet-Cauchy formula and the Area Formula.)
\begin{prob}
Suppose $E$ has dimension $n-1$, positive reach, and is locally regular (in $\R^n$). \\
a.) Find bounds for $\mathcal{H}^{n}( \mathcal{C}^E_d) / \frac{1}{2^d}.$\\
b.) How does this ratio relate to $\mathcal{H}^{n-1}(E)$?
\end{prob}
\textbf{Hint:} Use the ideas in Section~\ref{sec:reach} to calculate a
bound on the volume of the tube with thickness $2
\frac{\sqrt{n}}{2^d}$ centered on $E$.

\begin{que}
  Can we use the ``upstairs'' version of cubical covers to find better
  representations for sets and their boundaries? (Of course, ``better"
  depends on your objective!)
\end{que}

For the following question, we need the notion of the \emph{multiscale
  flat norm}~\cite{morgan-2007-1}. The basic idea of this distance,
which works in spaces of oriented curves and surfaces of any dimension
(known as currents), is that we can decompose the curve or surface $T$
into $(T - \partial S) + \partial S$, \emph{but} we measure the cost
of the decomposition by adding the volumes of $T-\partial S$ and $S$
(not $\partial S$!). By volume, we mean the $m$-dimensional volume, or
$m$-volume of an $m$-dimensional object, so if $T$ is $m$-dimensional,
we would add the $m$-volume of $T-\partial S$ and the ($m$+1)-volume
of $S$ (scaled by the parameter $\lambda$). We get that

\[\Bbb{F}_\lambda(T) = \min_S M_m(T-\partial S) + \lambda
M_{m+1}(S).\] It turns out that $T-\partial S$ is the best
approximation to $T$ that has curvature bounded by
$\lambda$~\cite{allard-2007-1}.  We exploit this in the following ideas
and questions.

\begin{rem}
  Currents can be thought of as generalized oriented curves or
  surfaces of any dimension $k$. More precisely, they are members of
  the dual space to the space of $k$-forms. For the purposes of this
  section, thinking of them as (perhaps unions of pieces of) oriented
  $k$-dimensional surfaces $W$, so that $W$ and $-W$ are simply
  oppositely oriented and cancel if we add them, will be
  enough to understand what is going on. For a nice introduction to
  the ideas, see for example the first few chapters of
  \cite{morgan-2008-geometric}.
\end{rem}

\begin{que}

Choose $k\in \{1,2,3\}$. In what follows we focus on sets $\Gamma$
which are one-dimensional, the interior of a cube $C$ will be denoted
$C^o$, and we will work at some scale $d$, i.e. the edge length of the cube 
will be $\frac{1}{2^d}$. 

Consider the piece of $\Gamma$ in $C^o$, $\Gamma \cap C^o$. Inside the
cube $C$ with edge length $\frac{1}{2^d}$, we will use the flat norm
to
\begin{enumerate}
\item find an approximation of $\Gamma \cap C^o$ with curvature 
  bounded by $\lambda = 2^{d+k}$ and
\item find the distance of that approximation from $\Gamma \cap C^o$.
\end{enumerate}
This decomposition is then  obtained by minimizing \[M_1((\Gamma\cap
C^o)-\partial S) + 2^{d+k}M_{2}(S) = \mathcal{H}^1((\Gamma\cap
C^o)-\partial S) + 2^{d+k}\mathcal{L}^2(S).\] The minimal $S$ will be
denoted $S_d$ (see Figure~\ref{fig:cube-flat-beta-1}).

\begin{figure}[H]
  \centering
  \scalebox{0.5}{\input{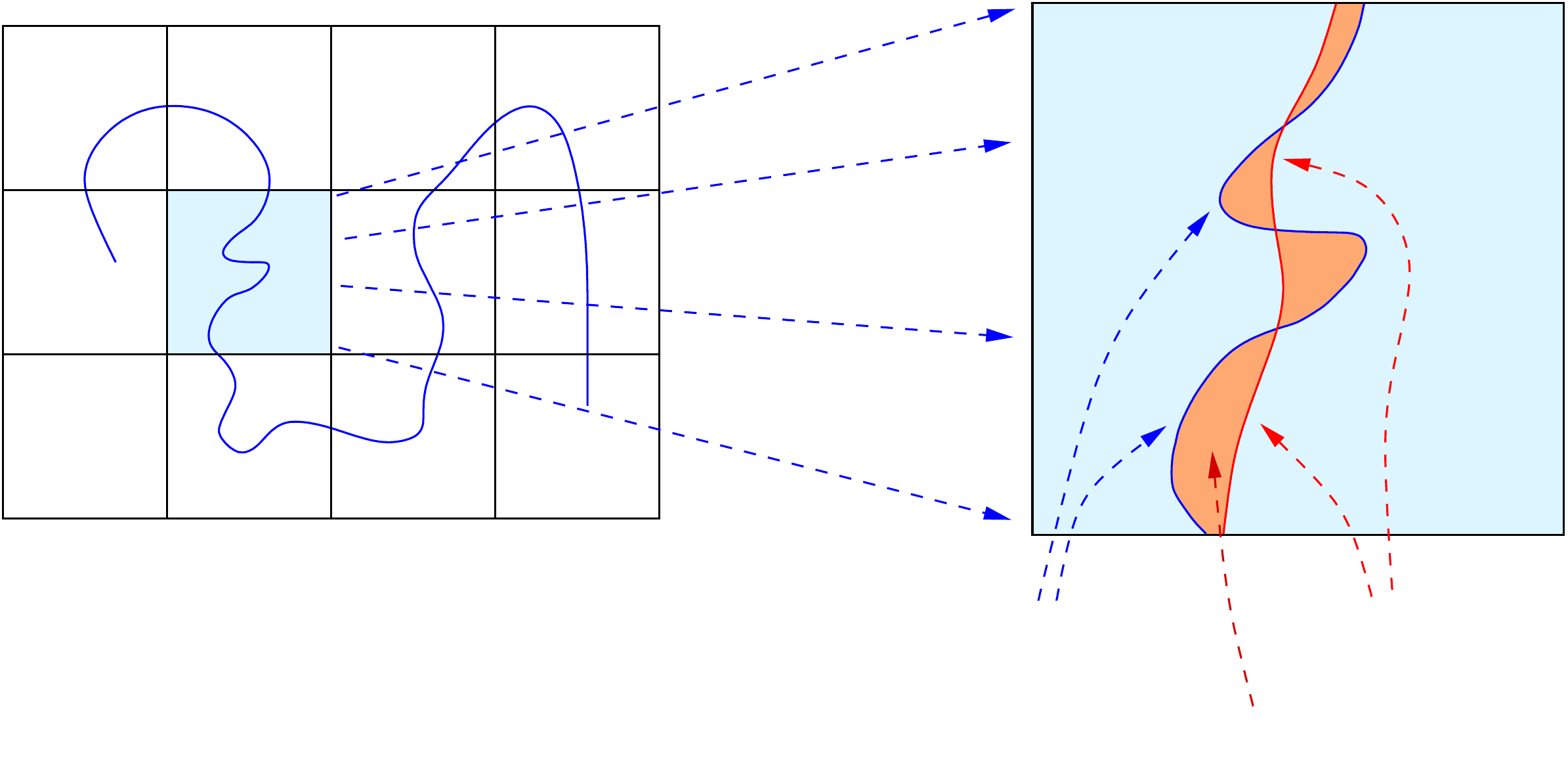_t}}
  \caption{Multiscale flat norm decomposition inspiring the definition of $\beta_\Gamma^{\Bbb{F}}$.}
  \label{fig:cube-flat-beta-1}
\end{figure}

{\blue Suppose that we define $\beta^{\mathbb{F}}_\Gamma(C)$
  by \[\beta^{\mathbb{F}}_\Gamma(C) l(C) = 2^{d+k}\mathcal{L}^2(S_d)\]
  so that \[\beta^{\mathbb{F}}_\Gamma(C) =
  2^{2d+k}\mathcal{L}^2(S_d).\] What can we say about the properties
  (e.g. rectifiability) of $\Gamma$ given the finiteness of $\sum_C
  \left(\beta^{\mathbb{F}}_\Gamma(3C)\right)^2 l(C)$?}
\end{que}

\begin{que}
  Can we get an advantage by using the flat norm decomposition as a preconditioner before
  we find cubical cover approximations? For example, define
\[ 
\mathcal{F}_d^\Gamma \equiv \mathcal{C}^{\Gamma_d}_d \text{ and }  \Gamma_d \equiv \Gamma - \partial S_d,
\]
\[
\text{where} \ S_d = \argmin_S \left(\mathcal{H}^1(\Gamma-\partial S)+2^{d+k}\mathcal{L}^2(S)\right).
\]
Since the flat norm minimizers have bounded mean curvature, is this
enough to force the cubical covers to give us better quantitative
information on $\Gamma$? How about in the case in which $\Gamma
= \partial E$, $E\subset \R^2$?
\end{que}

\section{Further Exploration}
\label{sec:further-exploration}

There are a number of places to begin in exploring these ideas
further.  Some of these works require significant dedication to master,
and it is always recommended that you have someone who has mastered a
path into pieces of these areas that you can ask questions of when you
first wade in. Nonetheless, if you remember that the language can
always be translated into pictures, and you make the effort to do that,
headway towards mastery can always be made. Here is an annotated list
with comments:

\begin{description}
\item[Primary Varifold References] Almgren's little
  book~\cite{almgren-2001-1} and Allard's founding
  contribution~\cite{allard-1972-1} are the primary sources for
  varifolds. Leon Simon's book on geometric measure
  theory~\cite{simon-1984-lectures} (available for free online) has a
  couple of excellent chapters, one of which is an exposition of
  Allard's paper.
\item[Recent Varifold Work] Both Buet and
  collaborators~\cite{buet-2013-varifolds,buet-2014-quantitative,buet-2015-discrete,buet-2016-surface,buet-2016-varifold}
  and Charon and
  collaborators~\cite{charlier-2014-fshape,charon-2013-varifold,charon-2014-functional}
  have been digging into varifolds with an eye to applications. While
  these papers are a good start, there is still a great deal of
  opportunity for the use and further development of varifolds.  On
  the theoretical front, there is the work of Menne and
  collaborators~\cite{menne-2008-c2, menne-2009-some,
    menne-2010-sobolev, menne-2012-decay, menne-2014-weakly,
    kolasinski-2015-decay, menne-2016-pointwise, menne-2016-sobolev,
    menne-2017-concept, menne-2017-geometric}.  We want to call
  special attention to the recent introduction to the idea of a
  varifold that appeared in the December 2017 AMS
  Notices~\cite{menne-2017-concept}.
\item[Geometric Measure Theory I] The area underlying the ideas here are
  those from geometric measure theory. The fundamental treatise in the
  subject is still Federer's 1969 \emph{Geometric Measure
    Theory}~\cite{fed69} even though most people start by reading
  Morgan's beautiful introduction to the
  subject, \emph{Geometric Measure Theory: A Beginner's Guide}~\cite{morgan-2008-geometric} and Evans' \emph{Measure Theory and Fine Properties of Functions}~\cite{evans-1992-1}. Also recommended are Leon
  Simon's lecture notes~\cite{simon-1984-lectures}, Francesco Maggi's
  book that updates the classic Italian approach~\cite{mag12}, and
  Krantz and Parks' \emph{Geometric Integration
    Theory}~\cite{krantz-2008-geometric}. 
\item[Geometric Measure Theory II] The book by
  Mattila~\cite{mattila-1999-geometry} approaches the subject from the
  harmonic-analysis-flavored thread of geometric measure theory. Some
  use this as a first course in geometric measure theory, albeit one
  that does not touch on minimal surfaces, which is the focus of the
  other texts above. De Lellis' exposition \emph{Rectifiable Sets,
    Densities, and Tangent Measures}~\cite{de-lellis-2008-1} or Priess'
  1987 paper \emph{Geometry of Measures in $\R^n$: Distribution,
    Rectifiability, and Densities}~\cite{preiss-1987-geometry} is also
  very highly recommended.
\item[Jones' $\beta$] In addition to the papers cited in the text
  ~\cite{jon90,oki92,ler03,sch07}, there are related works by David and
  Semmes that we recommended. See for
  example~\cite{david-1993-analysis}. There is also the applied work
  by Gilad Lerman and his collaborators that is often inspired by
  Jones' $\beta$ and his own development of Jones' ideas
  in~\cite{ler03}. See
  also~\cite{chen-2009-spectral,zhang-2012-hybrid,zhang-2009-median,arias-2011-spectral}. See
  also the work by Maggioni and
  collaborators~\cite{little-2009-estimation,allard-2012-multiscale}.

\item[Multiscale Flat Norm] The flat norm was introduced by Whitney in
   1957~\cite{whitney-1957-geometric} and used to create a topology
  on currents that permitted Federer and Fleming, in their landmark
  paper in 1960~\cite{federer-1960-normal}, to obtain the existence of
  minimizers. In 2007, Morgan and Vixie realized that a variational
  functional introduced in image analysis was actually computing a
  multiscale generalization of the flat norm~\cite{morgan-2007-1}. The
  ideas are beginning to be explored in these papers
  \cite{vixie-2010-multiscale, van-dyke-2012-thin,
    ibrahim-2013-simplicial, vixie-2015-some, ibrahim-2016-flat,
    alvarado-2017-lower}.
\end{description}

\appendix

\section{Measures: A Brief Reminder}
\label{sec:ap.measure}

In this section we remind the reader of a handful of concepts used in
the text. 

\begin{description}

\item[Measure] One way to think of a measure is as a generalization of the familiar notions of length, area, and volume in a way that allows us to define how we assign ``size" to a given subset of a set $X$, the most common being that of $n$-dimensional Lebesgue measure $\mathcal{L}^n$. 

Formally, let $X$ be a nonempty set and $2^X$ be the collection of all subsets of $X$. A \textit{measure} is defined \cite{evans-1992-1} to be a mapping $\mu: 2^X\rightarrow[0,\infty]$ such that
\begin{enumerate}
\item $\mu(\emptyset)=0$ and\\ 

\item if $$A\subset\cup_{i=1}^\infty A_i,$$ then $$\mu(A)\leq\sum_{i=1}^\infty \mu(A_i).$$
\end{enumerate}

Note that in most texts, this definition is known as an \textit{outer measure}, but we use this definition with the advantage that we can  still ``measure" non-measurable sets. 

\item[$\mu$-measurable] A subset $S\subset X$ is called $\mu$-measurable if and only if it satisfies the Carath\'{e}odory condition for each set $A\subset X$: \[ \mu(A)=\mu(A\cap S) + \mu (A\setminus S).\]

\item[Radon Measure]  Let us define the Borel sets in $\R^n$ to be those sets that are derived from the set of all open sets in $\R^n$ through the operations of countable union, countable intersection, and set difference. Then a measure $\mu$ on $\R^n$ is a \textit{Radon} measure if \cite{evans-1992-1}
\begin{enumerate}
\item every Borel set is $\mu$-measurable; i.e. $\mu$ is a \textbf{Borel measure}.\\
\item  for each $A\subset\R^n$ there exists a Borel set $B$ such that $A\subset B$ and $\mu(A)=\mu(B);$ 
i.e. $\mu$ is \textbf{Borel regular.}\\
\item  for each compact set $K\in\R^n, \, \mu(K)<\infty$; i.e. $\mu$ is \textbf{locally finite.} \\
\end{enumerate}

\item[Hausdorff Measure] With this outer (radon) measure, we can
  measure $k$-dimensional subsets of $\R^n$ $(k\leq n)$. While it is
  true that $\mathcal{L}^n=\Hd^n$ for $n\in
  \mathbb{N}$ (see section 2.2 of \cite{evans-1992-1}), Hausdorff measure $\Hd^k$ is also defined for
  $k\in[0,\infty)$ so that even sets as wild as fractals are
  \textit{measurable} in a meaningful way.

To compute the $k$-dimensional Hausdorff measure of $A\subset\R^n$:
\begin{enumerate}
\item Cover $A$ with a collection of sets $\mathcal{E}=\{E_i\}_{i=1}^\infty$, where diam$(E_i)\leq d \ \forall i.$ 
\item Compute the $k$-dimensional measure of that cover: $$\mathcal{V}_{\mathcal{E}}^k(A)=\sum_i\alpha(k)\left(\frac{\text{diam}(E_i)}{2}\right)^k,$$ where $\alpha(k)$ is the $k$-volume of the unit $k$-ball. 
\item Define $\Hd_d^k(A)=\inf_{\mathcal{E}}\mathcal{V}_{\mathcal{E}}^k(A)$, where the infimum is taken over all covers whose elements 
have maximal diameter $d$. 
\item Finally, we define $\Hd^k(A)=\lim_{d\downarrow0}\Hd_d^k(A).$
\end{enumerate}

\begin{figure}[H]
  \centering
  \scalebox{1}{\input{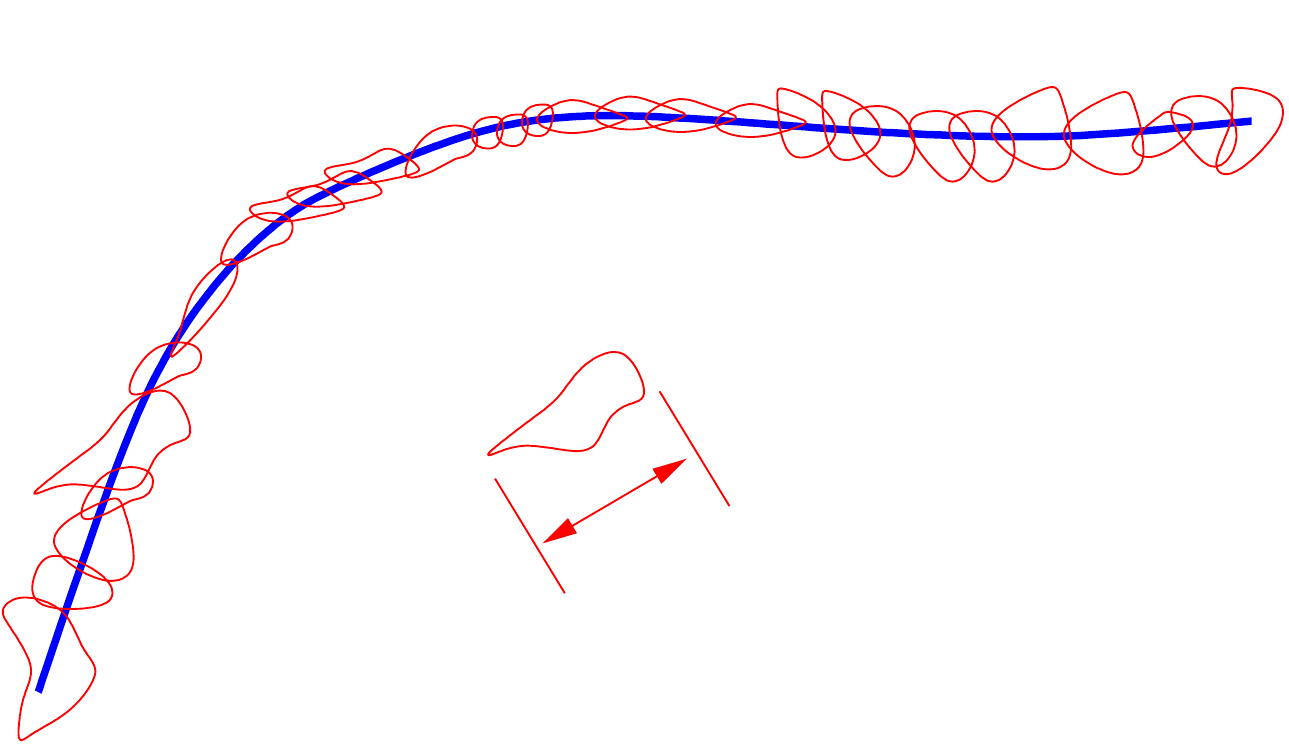_t}}
  \caption{The Hausdorff Measure is derived from a cover of arbitrary sets.}
  \label{haus}
\end{figure}

\item[Approximate Tangent Plane] We present here an approximate
  tangent $k$-plane based on integration. (The one-dimensional version
  is of course an approximate tangent line.) We start with the fact
  that we can integrate functions defined on $\R^n$ over
  $k$-dimensional sets using $k$-dimensional measures $\mu$ (typically
  $\mathcal{H}^k$). We zoom in on the point $p$ through dilation of
  the set $F$:

\[F_\rho(p) = \{x\in\Bbb{R}^n \ | \;\;x=\frac{y-p}{\rho}+\text{ p for some }y\in F\}.\]

We will say that the set $F$ has an approximate tangent $k$-plane $L$
at $p$ if the dilation of $F_\rho(p)$ converges weakly to $L$;
i.e. if

\[\int_{F_\rho} \phi d\mu \; \rightarrow \; \int_L \phi d\mu \ \ \text{ as } \rho \rightarrow 0\]

for all continuously differentiable, compactly supported $\phi:\R^n \rightarrow \R.$

In the next two figures, we note that the solid green lines are the
level sets of $\phi$ while the dashed green line indicates the
boundary of the support of $\phi$. Note also that the $\rho$'s of 0.4,
0.1, and 0.02 are approximate.

\begin{figure}[H]
  \centering
  \scalebox{.8}{\input{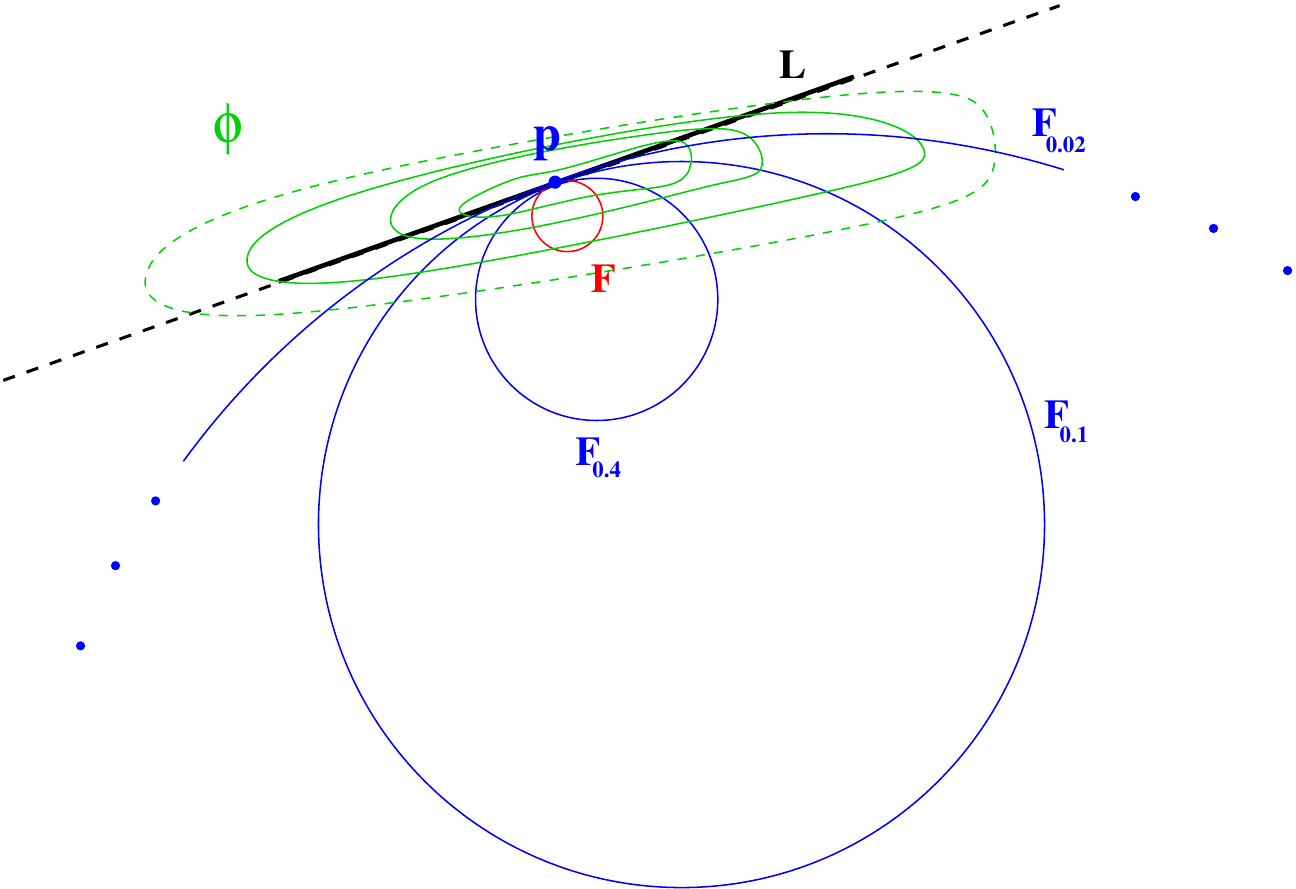_t}}
  \caption{The case of 1-planes (lines) where $L$ is the weak limit of the dilations of $F$.}
  \label{app2}
\end{figure}

\begin{figure}[H]
  \centering
  \scalebox{.76}{\input{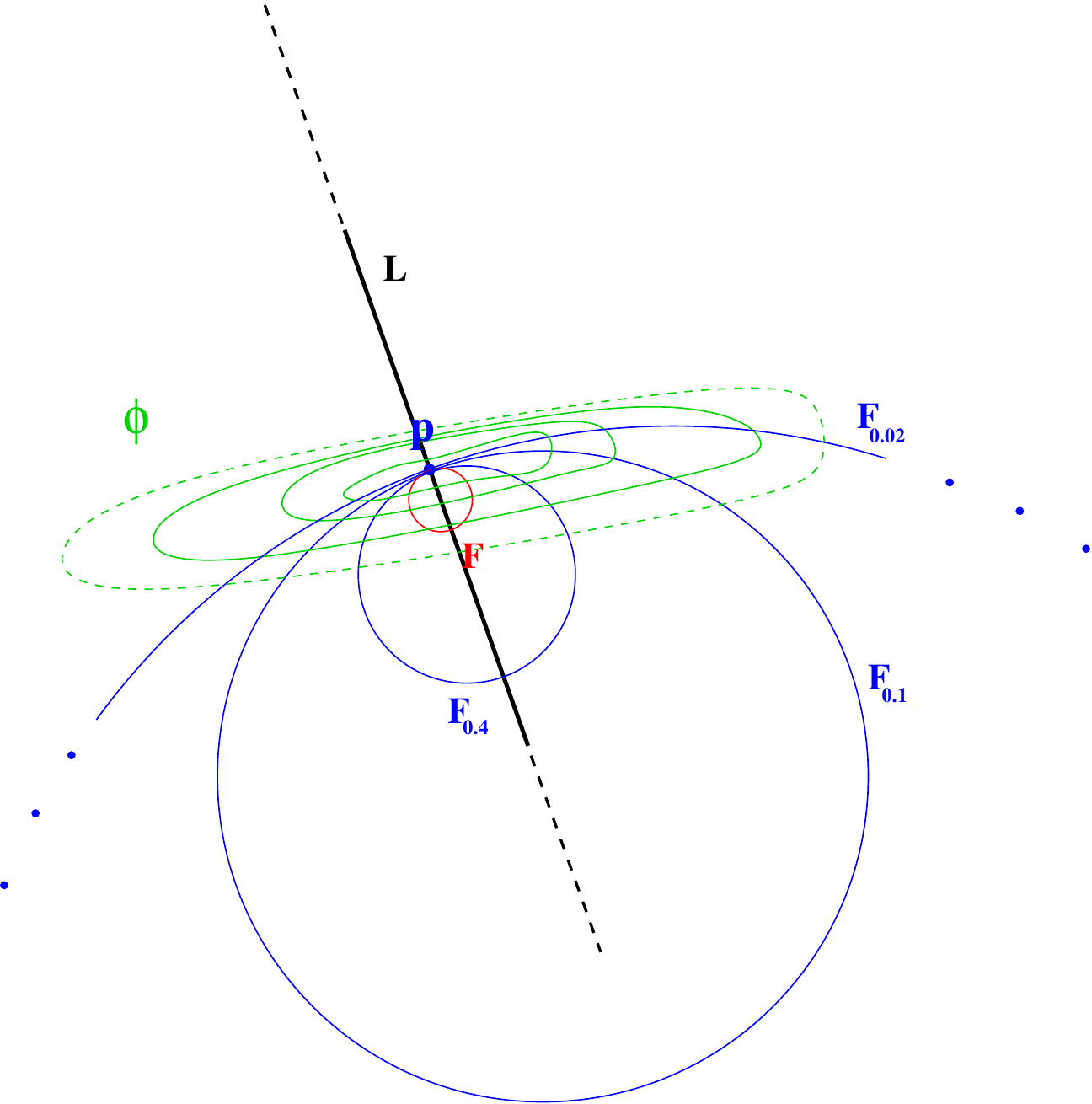_t}}
  \caption{The case of 1-planes (lines) where $L$ is \textit{not} the weak limit of the dilations of $F$.}
  \label{app2wrong}
\end{figure}
\end{description}

\bibliographystyle{plain}
\bibliography{cubicalcovers}

\end{document}